\date{}
\renewcommand\@pnumwidth{1.55em}
\renewcommand\@tocrmarg{2.55em}
\renewcommand\@dotsep{4.5}
\renewcommand\tableofcontents{%
    \section*{Contents}\ \vskip 5pt%
    \@starttoc{toc}%
    \addtocontents{toc}{\begingroup\protect\small}%
    \AtEndDocument{\addtocontents{toc}{\endgroup}}%
    }
\renewcommand*\l@part[2]{%
  \ifnum \c@tocdepth >-2\relax
    \addpenalty\@secpenalty
    \addvspace{2.25em \@plus\p@}%
    \begingroup
      \setlength\@tempdima{3em}%
      \parindent \z@ \rightskip \@pnumwidth
      \parfillskip -\@pnumwidth
      {\leavevmode
       \large \bfseries #1\hfil \hb@xt@\@pnumwidth{\hss #2}}\par
       \nobreak
       \if@compatibility
         \global\@nobreaktrue
         \everypar{\global\@nobreakfalse\everypar{}}%
      \fi
    \endgroup
  \fi}
\renewcommand*\l@section{\@dottedtocline{1}{0pt}{1.5em}}
\renewcommand*\l@subsection{\@dottedtocline{2}{1.5em}{2.3em}}
\renewcommand*\l@subsubsection{\@dottedtocline{3}{3.8em}{3.2em}}
\renewcommand*\l@paragraph{\@dottedtocline{4}{7.0em}{4.1em}}
\renewcommand*\l@subparagraph{\@dottedtocline{5}{10em}{5em}}
\newcommand{\vertbar}{\smash{\vrule height 15pt depth 6pt width .6pt}}
\renewcommand{\vec}[1]{\ensuremath{\mathchoice
                     {\mbox{\boldmath$\displaystyle#1$}}
                     {\mbox{\boldmath$\textstyle#1$}}
                     {\mbox{\boldmath$\scriptstyle#1$}}
                     {\mbox{\boldmath$\scriptscriptstyle#1$}}}}%
\numberwithin{equation}{section}
\newtheorem{Def}[equation]{Definition}
\newtheorem{Thm}[equation]{Theorem}
\newtheorem{Cor}[equation]{Corollary}
\newtheorem{Prop}[equation]{Proposition}
\newtheorem{Lem}[equation]{Lemma}
\newtheorem{Rem}[equation]{Remark}
\newtheorem{Exer}[equation]{Exercise}
\newtheorem{Example}[equation]{Example}
\newenvironment{definition}{\begin{Def}\ \rm}{\end{Def}}
\newenvironment{theorem}{\begin{Thm}\ }{\end{Thm}}
\newenvironment{proposition}{\begin{Prop}\ }{\end{Prop}}
\newenvironment{lemma}{\begin{Lem}\ }{\end{Lem}}
\newenvironment{example}{\begin{Example}\ \rm}{\end{Example}}
\newenvironment{remark}{\begin{Rem}\ \rm}{\end{Rem}}
\newenvironment{prop-def}{\begin{Prop-Def}\ }{\end{Prop-Def}}
\newenvironment{working-hypothesis}{\begin{WH}\ \rm}{\end{WH}}
\def\bvec#1{\ensuremath{\mathchoice
                     {\mbox{\boldmath\(\displaystyle#1\)}}
                     {\mbox{\boldmath\(\textstyle#1\)}}
                     {\mbox{\boldmath\(\scriptstyle#1\)}}
                     {\mbox{\boldmath\(\scriptscriptstyle#1\)}}}}%
\DeclareMathOperator*{\Res}{\mathrm{Res}}
\renewcommand{\i}{\bvec{i}}
\renewcommand{\P}{\mathrm{P}}
\newcommand{\I}{\mathrm{I}}
\newcommand{\underbarl}[1]{\lower 1.4pt \hbox{\underbar{\raise 1.4pt \hbox{#1}}}}
\newcommand{\tp}[1]{{}^t\hskip 0pt#1}
\newcommand{\llg}{\langle\hskip -1.5pt\langle}
\newcommand{\rrg}{\rangle\hskip -1.5pt\rangle}
\newcommand{\jrac}[2]{\dfrac{\lower 2pt\hbox{\(#1\)}}{\raise 2pt\hbox{\(#2\)}}}
\newcommand{\tj}[2]{\ensuremath{\mathchoice
                     {{{t_{#2}}^{\!\raise .20em\hbox{\mbox{\scalebox{.4}{\((#1)\)}}}}}} % \displaystyle
                     {{{t_{#2}}^{\!\raise .20em\hbox{\mbox{\scalebox{.4}{\((#1)\)}}}}}} % \textstyle
                     {{{t_{#2}}^{\raise .18em\hbox{\mbox{\scalebox{.3}{\((#1)\)}}}}}} % \scriptstyle
                     {{{t_{#2}}^{\raise .15em\hbox{\mbox{\scalebox{.2}{\((#1)\)}}}}}} % \scriptscriptstyle
                     }}
\newcommand{\ti}[1]{{\tj{#1}{}\,}}
\newcommand{\br}[1]{^{\mbox{\raisebox{3pt}{\scalebox{.5}{\(\left[#1\right]\)}}}}}
\newcommand{\twolower}[1]{\hbox{\raise -2pt \hbox{\(#1\)}}}
\newcommand{\hookdownarrow}{\kern 0pt
\hbox{\vbox{\offinterlineskip \kern 0pt
      \hbox{\(\cap\)}\kern 0pt
      \hbox{\hskip 3.5pt\(\downarrow\)}\kern 0pt}}
}  % not good fot dvi, but good for PS
\newcommand{\lqq}{\lq\lq}
\newcommand{\struth}{\vrule height 1.1em depth .7em width 0em}
\newcommand{\strutHigh}{\vrule height 13pt depth 0pt width 0pt}
\newcommand{\boxit}[1]{\vbox{\hrule\hbox{\vrule\kern3pt
    \vbox to 43pt{\hsize 182pt\kern3pt#1\eject\kern3pt\vfill}
    \kern1pt\vrule}\hrule}} % the basic box
\newcommand{\wt}{\mathrm{wt}}
\newcommand{\adots}{\mathinner{\mkern1mu\raise 0pt\vbox{\kern 7pt\hbox{.}}\mkern2mu
    \raise 4pt\hbox{.}\mkern2mu\raise 7pt\hbox{.}\mkern1mu}}
\newcommand{\wdots}{\vbox{\baselineskip 4pt \lineskiplimit 0pt
    \kern 6pt\hbox{.}\hbox{.}\hbox{}}}
\newcommand{\mdots}{\vbox{\baselineskip 4pt \lineskiplimit 0pt
    \kern 6pt\hbox{}\hbox{.}\hbox{.}}}
\newcommand{\rslt}{\mathrm{rslt}}
\newcounter{item}
\title{Hurwitz Integrality of Power Series Expansion \\ of The Sigma Function for a Plane Curve}
\author{Yoshihiro \^Onishi}
\begin{document}
\allowdisplaybreaks
\maketitle
\begin{abstract}
This paper shows Hurwitz integrality of the coefficients of expansion at the origin of 
the sigma function \(\sigma(u)\) associated to a certain plane curve 
which should be called a plane telescopic curve. 
For the prime \(2\), the expansion of \(\sigma(u)\) is not Hurwitz integral, 
but \(\sigma(u)^2\) is. 
This paper clarifies the precise structure of this phenomenon. 
Throughout the paper, computational examples for the trigonal genus three curve (\((3,4)\)-curve) 
\(y^3+(\mu_1x+\mu_4)y^2+(\mu_2x^2+\mu_5x+\mu_8)y=x^4+\mu_3x^3+\mu_6x^2+\mu_9x+\mu_{12}\) (\(\mu_j\) are constants) 
are given. 
\end{abstract}
\newpage
\noindent
{\large\bf Introduction}
\vskip 10pt
\noindent

{\bf 1}. More than twenty years ago, Buchstaber, Enolskii, and Leykin 
started to investigate certain family of plane algebraic curves, 
which they call \((n,s)\)-curves. 
Around that time, Shinji Miura also wrote some papers for the purpose of coding theory 
on larger class of curves includes many non-plane curves and the family above. 
His papers are published only in Japanese. 
The paper \cite{miura_1998} is one of his most important papers. 
An important subclass of his class is called the class of telescopic curves, 
in which any curve has many good properties in a contrast to the other curves. 
Especially, they have good number theoretic properties as I described below. 
For example, like the coefficient of the elliptic curves, 
any coefficient of a telescopic curve has weight, 
and good generalization of the sigma function of Weierstass can be attached. 
In this paper, we only treat {\it plane} curves introduced by Buchstaber, Enolskii, and Leykin, 
and show that the power series expansion of the attached sigma function is Hurwitz integral 
(see the definition \ref{hurwitz_integral}) outside the prime \(2\). 
Moreover, we show full detail of how the prime \(2\) appears in the expansion. 

Our result includes the case for the Weierstrass sigma function. 
Investigations for this case are given in papers by 
Bannai-Kobayashi \cite{bannai-kobayashi_2006}, Mazur-Stein-Tate \cite{MST_2006}, 
Mazur-Tate \cite{mazur-tate_1991} from other points of view. 
Our result in this paper might be generalized to any telescopic curves. 
\par
{\bf 2}. For any Abelian variety \(A\) with a choice of a theta divisor 
which does not pass through the origin of \(A\), 
some \(p\)-adic theta functions are constructed in \cite{breen} and \cite{barsotti_1970}. 
In such case, the power series expansion of those theta series around the origin 
are Hurwitz integral. 
However, for any telescopic curve, the standard theta divisor passes through the origin of the Jacobian variety. 
Since, in general, the theta divisor has some singularity at the origin, 
the author was not able to prove Hurwitz integrality by a similar method 
to \cite{breen} and \cite{barsotti_1970}. 
In fact, the expansion of the sigma function for a telescopic curve is not completely Hurwitz integral and 
has some defect at the prime \(2\). 
So, the author believe that to publish this paper is worthwhile. 
\par
{\bf 3}. The sigma function for a non-singular algebraic curve of genus \(g\) is 
an entire function of \(g\) variables which is invariant under any change (modular transform) of 
the symplectic base of the homology group of the curve and whose zeroes are of order \(1\) and 
exactly along the pull-back image of the \((g-1)\)st symmetric product of copies of the curves 
with respect to the map given by modulo the lattice. 
Because of its invariance under the modular transform, 
the expansion of the sigma function around the origin is expressed only in terms 
of the coefficients of the defining equation of the curve. 
In fact, the coefficients of the expansion are polynomials over the rationals 
of the coefficients of the defining equation of the curve. 
Actually, the expansion is Hurwitz integral over the ring generated 
by the coefficients of the defining equation over the {\it integers} outside the prime \(2\), 
and is treated, as it is without modifications, not only over the complex numbers 
but also over \(p\)-adic numbers or other rings. 
\par
{\bf 4}. In Section \ref{preliminaries}, we introduce basic situation and notations. 
In Section \ref{the_det_expression}, following Nakayashiki \cite{nakayashiki_2010}, 
we show an expression of the sigma function as a determinant of infinite size (the tau function) 
times an exponential function. 
In Section \ref{the_klein_2-form}, we prove the integrality of an expansion of 
the canonical Klein's fundamental 2-form. 
In Section \ref{the_exp_part}, after analyzing the exponential part, 
we complete the proof. 
As multivariate sigma functions are not well-known, 
we present fairly detailed proofs as well as many examples. 
\par
{\bf 5}. Acknowledgments: The author is grateful for significant advice from A. Nakayashiki. 
Concerning the part {\bf 2} above, the author has a help from Shin-ichi Kobayashi. 
J.C. Eilbeck showed me several examples from his data. 
The author sincerely appreciate this. 
\par
{\bf 6}. 
We use the usual notation \(\mathbb{Z}\), \(\mathbb{Q}\), and \(\mathbb{C}\) for 
the ring of integers, the field of rationals, the field of complex numbers. 
Finally, we note here that the 1-forms of the first kind are included our definition 
of 1-forms of the second kind, in this paper. 
\vskip 50pt
\noindent
\tableofcontents
\newpage
\section{Statement of the main theorem}\label{preliminaries}%%%%%%%%%%%%%%%%%%%%%%%%%%%%%%%%%%%%%%%%%%%%%%%%%
%%%%%%%%%%%%%%%%%%%%%%%%%%%%%%%%%%%%%%%%%%%%%%%%%%%%%%%%%%%%%%%%%%%%%%%%%%%%%%%%%%%%%%%%%

%\noindent 
Let \(e\) and \(q\) be fixed two positive integers such that \(e<q\) and \(\gcd(e,q)=1\). 
Let us define, for these integers, a polynomial of \(x\) and \(y\) 
  \begin{equation}\label{5.01}
  f(x,y)=y^e+p_1(x)y^{e-1}+\cdots+p_{e-1}(x)y-p_e(x),
  \end{equation}
where  \(p_j(x)\) is a polynomial of \(x\) of degree \(\lceil\tfrac{jq}{e}\rceil\) or below
and its coefficients are denoted as
  \begin{equation}\label{5.02}
  \begin{aligned}
  p_j(x)&=\sum_{k:jq-ek>0}\mu_{jq-ek}\,x^k \ \ \ (1\leqq j\leqq e-1),\\
  p_e(x)&=x^q+\mu_{e(q-1)}x^{q-1}+\cdots+\mu_{eq}. 
  \end{aligned}
  \end{equation}
The base ring over which we set up situation is quite general. 
However, for simplicity we start by letting it an algebraically closed field 
and assuming \(\mu_i\) to be constants belonging to the field. % or indeterminates. 
Let \(\mathscr{C}\) be the projective curve defined by 
  \begin{equation}\label{plane_miura}
  f(x,y)=0
  \end{equation}
having unique point \(\infty\) at infinity. 
This should be called an {\it \((e,q)\)-curve} following to Buchstaber, Enolskii, and Leykin, or 
{\it plane Miura curve} after the paper \cite{miura_1998}. 
If this is non-singular, the genus of it is  \((e-1)(q-1)/2\). 
Whether \(\mathscr{C}\) is non-singular or singular, we constantly denote this by \(g\): 
\begin{equation*}
g=\frac{(e-1)(q-1)}2.
\end{equation*}
As general elliptic curve is defined by an equation of the form
  \begin{equation}\label{5.035}
  y^2+(\mu_1x+\mu_3)y=x^3+\mu_2x^2+\mu_4x+\mu_6,
  \end{equation}
our curve \(\mathscr{C}\) is a natural generalization of elliptic curves. 

We introduce weight as follows:
\begin{equation*}
\wt(\mu_j)=-j, \ \ \ 
\wt(x)=-e, \ \ \ 
\wt(y)=-q. 
\end{equation*}
Then all the equations for functions, power series, differential forms, and so on in this paper are of homogeneous weight. 
Needless to say that \(\wt(f(x,y))=-eq\). 

Here, we shall explain what is the sigma function roughly. 
Let \(J=\mathrm{Jac}(\mathscr{C})\) be the Jacobian variety of \(\mathscr{C}\). 
Now, we take the {\it standard theta divisor}  \(\Theta\br{g-1}\subset J\) 
that is strictly defined in (\ref{theta_cycle}) below. 
Then the {\it sigma function} attached to  \(\mathscr{C}\) is a meromorphic section of
the sheaf \(\mathscr{O}(\Theta\br{g-1})\). 
In other words, it is an entire function defined over the universal (Abelian) covering \(\mathbb{C}^g\) of \(J\) 
which is invariant under the modular transformation associated to the natural symplectice structure 
and the second derivatives of logarithm of it are periodic function with respect to the lattice 
determined by the chosen symplectic base of \(H_1(\mathscr{C},\mathbb{Z})\). 
So that, the sigma function has zeroes of order \(1\) along the pull-back divisor of \(\Theta\br{g-1}\subset\mathrm{Jac}(\mathscr{C})\) 
under \(\mathbb{C}^g\rightarrow J\) and invariant under the transformation by the natural action of \(\mathrm{Sp}(2g,\mathbb{Z})\). 
Such function is uniquely determined up to overall multiplication of a constant. 
The modular invariance implies that the power series expansion of the sigma function is 
expressed only by data on  \(\mathscr{C}\) such as its coefficients. 
In fact, the power series expansion of the sigma function around the origin 
has polynomial coefficients in \(\mu_j\)'s over \(\mathbb{Q}\) (\cite{nakayashiki_2008}). 

In order to define (see \ref{def_sigma}) the sigma function classically, 
we take the point \(\infty\) to be the base point of \(\mathscr{C}\), 
and fix a symplectic base of \(H_1(\mathscr{C},\mathbb{Z})\) and of \(H^1(\mathscr{C},\mathbb{C})\) as we explain below. 
As usual, the base of later one consist of \(g\) differential forms of the first kind 
and \(g\) ones of the second kinds having a pole only at \(\infty\). 

Assuming that we have defined the sigma function strictly, we shall state the main result after 
the following two more definitions. 

\begin{definition}
For each coefficient \(\mu_j\) of \(f(x,y)\), 
we denote \({\mu_j}'=\frac12\mu_j\) if \(\mu_j\) is the coefficient of monomial 
with odd power of \(x\) times odd power of \(y\), 
and \({\mu_j}'=\mu_j\) otherwise. 
Moreover, we denote by \(\bvec{\mu}'\) the set of all \({\mu_j}'\). 
\end{definition}
\begin{example}
If \((e,q)=(3,4)\), then
\begin{equation*}
f(x,y)=y^3+(\mu_1x+\mu_4)y^2+(\mu_2x^2+\mu_5x+\mu_8)y-(x^4+\mu_3x^3+\mu_6x^2+\mu_9x+\mu_{12}). 
\end{equation*}
Hence, \(\vec{\mu}'=\{\mu_1,\mu_4,\mu_8,\mu_2,\tfrac12\mu_5,\mu_3,\mu_6,\mu_9, \mu_{12}\}\). 
\end{example}
\begin{definition}\label{hurwitz_integral}
Let \(R\) be an integral domain with characteristic \(0\) and \(z_1\), \(z_2\),\(\cdots\), \(z_n\) be 
indeterminates. Then we define
\begin{equation*}
R\llg z_1, z_2, \cdots, z_n \rrg
=\Bigg\{\sum_{j_1=1}^{\infty}\sum_{j_2=1}^{\infty}\cdots\sum_{j_n=1}^{\infty}
a_{j_1j_2\cdots {j_n}}\frac{\,{z_1}^{j_1}{z_2}^{j_2}\cdots{z_n}^{j_n}\,}{j_1!j_2!\cdots j_n!}\ \Bigg|\ 
a_{j_1j_2\cdots{j_n}}\in R\,\Bigg\}. 
\end{equation*}
This is a commutative ring by usual addition and multiplication. 
If a power series with coefficients in the quotient field of \(R\) belongs to this ring, 
it is said to be {\it Hurwitz integral} over \(R\). 
\end{definition}

The sigma function \(\sigma(u)\) is a function of \(g\) variables. 
The entries of the variable \(u\) are suffixed by the sequence of positive integers \(\{w_g,w_{g-1},\cdots,w_1\}\) 
that are not expressed as \(ae+bq\) with positive integers \(a\) and \(b\), 
which are called the {\it Weierstrass gaps} for \((e,q)\), 
and is denoted as \(u=(u_{w_g},u_{w_{g-1}},\cdots,u_{w_1})\). 
The main result of this paper is the following. 

\begin{theorem}\label{main_thm}% --------------------------------------------------------------------------
Using the notation above, we have the following\,{\rm :} \\
The power series expansion of the sigma function \(\sigma(u)=\sigma(u_{w_g},u_{w_{g-1}},\cdots,u_{w_1})\) 
attached to the curve \(\mathscr{C}\) defined by \(f(x,y)=0\) of {\rm (\ref{5.01})} has the following properties\,{\rm :}
\begin{equation*}
\begin{aligned}
\sigma(u)&\in\mathbb{Z}[\bvec{\mu}']\llg u_{w_g}, \cdots, u_{w_1}\rrg, \\
\sigma(u)^2&\in\mathbb{Z}[\bvec{\mu}]\llg u_{w_g}, \cdots, u_{w_1}\rrg.
\end{aligned}
\end{equation*}
\end{theorem}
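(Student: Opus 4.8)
The plan is to follow the roadmap laid out in part {\bf 4} of the Introduction and reduce the Hurwitz integrality of $\sigma(u)$ to two separate integrality statements: one for the determinantal (tau) part and one for the exponential part. Recall from Section \ref{the_det_expression} that Nakayashiki's expression writes $\sigma(u)$ as an infinite determinant $\tau(u)$ (built from a basis of differentials of the first and second kind, evaluated along the curve and suitably normalized) multiplied by $\exp\!\bigl(-\tfrac12\,{}^t\!u\,\eta\,\omega^{-1}u\bigr)$, where $\eta,\omega$ are the period matrices. Since $\sigma$ is modular invariant, this product, when re-expanded in the local coordinates $u_{w_g},\dots,u_{w_1}$ at $\infty$, has coefficients that are polynomials in the $\mu_j$ over $\mathbb{Q}$; the whole problem is to locate the denominators, and to show the only prime that can occur is $2$, appearing exactly as predicted by the passage from $\mathbb{Z}[\bvec{\mu}]$ to $\mathbb{Z}[\bvec{\mu}']$.

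First I would establish the integrality of the tau part. The key input is the integrality of the power series expansion of the Klein fundamental $2$-form $\omega(P_1,P_2)$, proved in Section \ref{the_klein_2-form}; concretely, one shows that in the local parameter $t$ at $\infty$ (with $x\sim t^{-e}$, $y\sim t^{-q}$) the chosen basis of second-kind differentials, and the bilinear form assembled from them, has a Hurwitz-integral expansion over $\mathbb{Z}[\bvec{\mu}]$. Feeding this into the Schur-type/determinantal formula for $\tau$, and using that the relevant Schur polynomial (the leading term, indexed by the partition of Weierstrass gaps) has integer coefficients, one gets $\tau(u)\in\mathbb{Z}[\bvec{\mu}]\llg u_{w_g},\dots,u_{w_1}\rrg$. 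Here weight-homogeneity is used repeatedly as a bookkeeping device: every coefficient lies in a single graded piece, which bounds which monomials in the $\mu_j$ can appear and lets one argue prime-by-prime.

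The delicate step, and the one where the prime $2$ enters, is the exponential factor, handled in Section \ref{the_exp_part}. The matrix $\tfrac12\,\eta\omega^{-1}$ need not be integral; what one must show is that after expanding $\exp(-\tfrac12\,{}^t\!u\,\eta\omega^{-1}u)$ the resulting series is Hurwitz integral over $\mathbb{Z}[\bvec{\mu}']$, i.e. that the only obstruction is a single factor of $2$ attached precisely to those $\mu_j$ that are coefficients of a monomial $x^{\text{odd}}y^{\text{odd}}$. The mechanism is that $\sigma(u)^2$ squares the exponential, turning $-\tfrac12\,{}^t\!u\,\eta\omega^{-1}u$ into $-{}^t\!u\,\eta\omega^{-1}u$, and one shows the quadratic form $\eta\omega^{-1}$ itself expands integrally over $\mathbb{Z}[\bvec{\mu}]$; combined with the tau estimate this gives the second line of the theorem, $\sigma(u)^2\in\mathbb{Z}[\bvec{\mu}]\llg u_{w_g},\dots,u_{w_1}\rrg$. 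For the first line one then extracts a square root of a Hurwitz series: since $\sigma(u)=1\cdot(\text{Schur polynomial})+\cdots$ has an invertible leading term, the square root is unique, and a $2$-adic (binomial-series) analysis of $\sqrt{1+\cdots}$ shows the denominators introduced are exactly powers of $2$, which are absorbed by replacing the offending $\mu_j$ with ${\mu_j}'=\tfrac12\mu_j$. I expect this square-root/$2$-adic valuation argument — pinning down that the $2$-power denominators are \emph{precisely} those described by $\bvec{\mu}'$ and no worse — to be the main obstacle; everything else is integrality bookkeeping organized by the weight grading.
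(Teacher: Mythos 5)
There is a genuine gap: you have misplaced the source of the prime $2$. In Nakayashiki's expression (Theorem \ref{inf_det}), the exponent is $q(U)=\sum_j c_{w_j}U_{w_j}+\tfrac12\sum_{i,j}q_{ij}U_{w_i}U_{w_j}$, and the quadratic part is \emph{not} where the denominator comes from: the $q_{ij}$ lie in $\mathbb{Z}[\bvec{\mu}]$ by the algebraic construction of Klein's fundamental $2$-form (Proposition \ref{def_eta}), and the overall $\tfrac12$ is harmless because $\tfrac12 U_{w_i}^2=U_{w_i}^2/2!$ and the off-diagonal terms pair up via $q_{ij}=q_{ji}$, so this part is already Hurwitz integral over $\mathbb{Z}[\bvec{\mu}]$. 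The whole subtlety lives in the \emph{linear} term, whose coefficients $c_j$ are defined by $\log\sqrt{t^{-(2g-2)}\,\omega_{w_g}(t)/dt}=\sum_j (c_j/j)t^j$ and a priori lie only in $\tfrac12\mathbb{Z}[\bvec{\mu}]$ because of the square root; the paper's final section performs a mod-$2$ analysis of $1/\tilde f_s=(1+G_s)(1+G_s^2)(1+G_s^{4})\cdots$ in the arithmetic local parameter, tracking parities of the powers of $t$ and $s$, to show that a half-integral $c_j$ occurs exactly when a $\mu_j$ multiplies a monomial $x^{\mathrm{odd}}y^{\mathrm{odd}}$ — which is precisely what the substitution $\mu_j\mapsto{\mu_j}'=\tfrac12\mu_j$ in the definition of $\bvec{\mu}'$ repairs. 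Your proposal never mentions this linear term, and instead attributes the $2$ to the period quadratic form $\eta'{\omega'}^{-1}$, which is a matrix of transcendental periods, not something that "expands integrally over $\mathbb{Z}[\bvec{\mu}]$"; the algebraic, integral object that replaces it in this framework is exactly the Klein form data $q_{ij}$, and those cause no trouble. (A smaller misattribution: the Klein form integrality feeds the exponent, not the determinantal factor; the determinant is integral because $\varGamma$, the $p_j$, and $B_0^{-1}\in\mathrm{SL}(g,\mathbb{Z}[\bvec{\mu}])$ are.)

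Your fallback route — prove $\sigma^2$ integral and extract a square root $2$-adically — also fails as stated. The leading term of $\sigma$ is the Schur polynomial (e.g.\ $u_5-u_1u_2^2+\tfrac1{20}u_1^5$ for the $(3,4)$-curve), which vanishes at the origin and is not a unit in the power series ring, so there is no binomial expansion of $\sqrt{1+\cdots}$ to control denominators; uniqueness of the square root up to sign does not by itself bound $2$-adic valuations of its coefficients. More importantly, even granting that only powers of $2$ can occur, your argument contains no mechanism linking those denominators to the specific coefficients $\mu_j$ of $x^{\mathrm{odd}}y^{\mathrm{odd}}$ monomials, i.e.\ no way to prove the sharp statement $\sigma(u)\in\mathbb{Z}[\bvec{\mu}']\llg u_{w_g},\cdots,u_{w_1}\rrg$ rather than merely $\sigma(u)\in\mathbb{Z}[\tfrac12][\bvec{\mu}]\llg u_{w_g},\cdots,u_{w_1}\rrg$. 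That identification is exactly the parity analysis of $\omega_{w_g}(t)$ written in $t,s$ (the Lemma of Section \ref{the_exp_part}), which your plan would need to reproduce in some form.
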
% --------------------------------------------------------------------------------------------
\begin{remark}
(1) To the best knowledge of the author, there are no example 
such that \(\sigma(u)\in\mathbb{Z}[\bvec{\mu}]\llg u_{w_g}, \cdots, u_{w_1}\rrg\). \\ 
(2) Let \(x^iy^j\) be a term of \(f(x,y)\) with both of \(i\) and \(j\) being odd. 
The suffix \({dq-di-qj}\) of its coefficient \(\mu_{dq-di-qj}\) belongs to the Weierstrass gap sequence 
(\ref{w_gaps}) for the pair \((d,q)\). 
\end{remark}
Let us give an illustration. 
\begin{example}
We quote first several terms of \(\sigma(u)\) for \((e,q)=(3,4)\) from Theorem 7.1 in \cite{eemop_2009} 
Dividing the terms by weight with respect to \((u_{w_g}, \cdots, u_{w_1})\) only as 
\begin{equation*}
\sigma(u)=C_5+C_6+C_7+C_8+C_9+\cdots,
  \end{equation*}
then we see
  \begin{equation*}
    \begin{aligned}
C_5&={u_5}-{u_1}{u_2}^2+6\frac{{u_1}^5}{5!},\\
C_6&=2{\mu_1}\frac{{u_1}^4}{4!}\frac{u_2}{1!}-2{\mu_1}\frac{{u_2}^3}{3!},\\
C_7&=10({\mu_1}^2-3{\mu_2})\frac{{u_1}^7}{7!}+2{\mu_2}\frac{{u_1}^3}{3!}\frac{{u_2}^2}{2!},\\
C_8&=2({\mu_1}^3+9{\mu_3}-2{\mu_1}{\mu_2})\frac{{u_1}^6}{6!}\frac{u_2}{1!}-6{\mu_3}\frac{{u_1}^2}{2!}\frac{{u_2}^3}{3!},\\
C_9&=14({\mu_1}^2-3{\mu_2})^2\frac{{u_1}^9}{9!}+2(2{\mu_4}-{\mu_2}^2+{\mu_1}^2{\mu_2}+6{\mu_1}{\mu_3})\frac{{u_1}^5}{5!}\frac{{u_2}^2}{2!}\\
&\qquad -2(4{\mu_1}{\mu_3}+4{\mu_4}+{\mu_2}^2)\frac{u_1}{1!}\frac{{u_2}^4}{4!}+2{\mu_4}\frac{{u_1}^4}{4!}\frac{u_5}{1!},\\
&\qquad\qquad\cdots\cdots\cdot.
    \end{aligned}
  \end{equation*}
Observing these terms, they looks like Hurwitz integral. 
\par
Our main result \ref{main_thm} claims that the expansion is Hurwitz integral over \\
\(\mathbb{Z}[\mu_1, \mu_4, \mu_2, \tfrac12\mu_5, \mu_8, \mu_3, \mu_6, \mu_9,\mu_{12}]\) 
(we need to divide only \(\mu_5\) by \(2\)). 
According to a computation by J.C.~Eilbeck, we have 

\begin{equation*}
\begin{aligned}
\frac{3}{2}{\mu_5}^2\frac{{u_1}^5{u_5}^2}{5!2!} \ \ &[15],\ \ \ 
-\frac{1}{2}{\mu_5}^2\frac{{u_1}{u_2}^2{u_5}^2}{2!2!} \ \ [15],\ \ \
\frac{1}{4}{\mu_5}^2\frac{{u_5}^3}{3!} \ \ [15],\\
-\frac{15}{2}{\mu_2}{\mu_5}^2\frac{{u_1}^7{u_5}^2}{7!2!} \ \ &[17],\ \ \ 
\frac{1}{2}{\mu_2}{\mu_5}^2\frac{{u_1}^3{u_2}^2{u_5}^2}{3!2!2!} \ \ [17],\\
\frac{63}{2}{\mu_2}^2{\mu_5}^2\frac{{u_1}^9{u_5}^2}{9!2!} \ &[19], \ \ \ 
-\frac{1}{2}{u_2}^2{\mu_2}^2{\mu_5}^2\frac{{u_1}^5{u_2}^2{u_5}^2}{5!2!2!} \ \ [19], \ \ \ 
-\frac{1}{2}{\mu_2}^2{\mu_5}^2\frac{{u_1}{u_2}^4{u_5}^2}{2!4!} \ \ [19], \\
\frac{1}{4}{\mu_5}^3\frac{{u_1}^3{u_2}{u_5}^3}{3!3!} \ \ &[20].\\
\end{aligned}
\end{equation*}
Here, each number in \([\ \ ]\) indicates the weight of the term with respect to \(u_{w_j}\)s. 
\end{example}
%\begin{example}
%(2,7)-curve\\
%I just have finished its calculation.\\
%The result states that the sigma expansion is Hurwitz integral over\\
%\(\mathbb{Z}[\tfrac12\mu_1, \mu_3, \tfrac12\mu_5, \mu_7, \mu_2, \mu_4, \mu_6, \mu_8, \mu_{10}, \mu_{12}, \mu_{14}]\)\\
%(only \(\mu_1\) and \(\mu_5\) need to be divided by 2. )
%\end{example}
This paper is organized as follows. 
Using the result in \cite{nakayashiki_2010} of Nakayashiki, 
the sigma function is expressed as a product of the following two functions. 
The first one is a determinant of infinite size whose entries 
are coefficients of functions with only pole at \(\infty\) with respect to
certain local parameter \(t\) at \(\infty\), and 
are indexed by \(\mathbb{N}\times\mathbb{N}\), where \(\mathbb{N}\) is the set of positive integers. 
The other function is an exponential of power series defined by using
some coefficients of expansion with respect to \(t\) of certain differential \(1\)-form of the first kind, 
and of expansion of so-called Klein's differential \(2\)-form for \(\mathscr{C}\),
which is determined from the \(2g\) forms representing a base of \(H^1(\mathscr{C},\mathbb{C})\). 
The former one has zeroes on the same place with \(\sigma(u)\) and 
is expanded at the origin as a power series of Hurwitz integral over \(\mathbb{Z}[\bvec{\mu}]\). 
However, the second derivatives of this determinant are not periodic functions. 
By multiplying the later exponential function, we get periodic functions. 
Such exponential function is expanded as a power series 
not of Hurwitz integral at the prime \(2\) (over \(\mathbb{Z}[\bvec{\mu}]\)). 
So, our proof of the main theorem is divided into two parts corresponding to such two factors of \(\sigma(u)\). 
\vskip 25pt
\section{Arithmetic local parameter}\label{arith_local_param}%%%%%%%%%%%%%%%%%%%%%%%%%%%%%%%%%%%%%%%%%%%%%%%%%%%%%%%%
%%%%%%%%%%%%%%%%%%%%%%%%%%%%%%%%%%%%%%%%%%%%%%%%%%%%%%%%%%%%%%%%%%%%%%%%%%%%%%%%%%%%%%%%%%%%%%%%%%%%%%%%%%%%%%%%%%%%%%%
%\noindent
In this section, we define a special local parameter called the {\it arithmetic local parameter}. 
Let \(e\) and \(q\) be two coprime positive integers such that \(q>e\). 
Consider the pairs \((a,b)\)  of integers such that
\begin{equation*}
ae-bq=1. 
% -ce+dq=2
\end{equation*}
We chose the pair \((a,b)\) such that the absolute value \(|a|\) is minimal in the such pairs. 
Note that, in this case, \(|b|\) is also minimal. 
We define \(\mathrm{sign}(a)\) to be \(1\) or \(-1\) 
according to the sign of \(a\). Let
\begin{equation*}
c=-2a+\mathrm{sign}(a)q, \ \ \ \ 
d=-2b+\mathrm{sign}(a)e.
\end{equation*}
Then we see
\begin{equation}\label{c_and_d}
-ce+dq=2. 
\end{equation}
Especially, we have
\begin{equation*}
|a|<\lceil{q/2}\rceil, \ \ \ |b|<\lceil{e/2}\rceil, \ \ \ 
|c|<\lceil{q/2}\rceil, \ \ \ |d|<\lceil{e/2}\rceil,
\end{equation*}
and that
\begin{equation*}
\begin{aligned}
(ad-bc)e&=d+2b, \ \ \ |d+2b|<\tfrac32(e+1)\,; \\
(ad-bc)q&=c+2a, \ \ \ |c+2a|<\tfrac32(q+1).
\end{aligned}
\end{equation*}
Because of (\ref{c_and_d}), we have
\begin{equation*}
ad-bc=\mathrm{sign}(a). 
\end{equation*}
Therefore, the four integers \(a\), \(b\), \(c\), and \(d\) are simultaneously positive or negative. 
Now for the curve \(\mathscr{C}\) defined by (\ref{plane_miura}), we let
\begin{equation*}
t=x^{-a}y^b, \ \ \ s=x^cy^{-d}. 
\end{equation*}
Then \(t\) is a local parameter at \(\infty\). 
The parameter \(t\) has nice properties as we will see. 
We call this \(t\) the {\it arithmetic local parameter} of \(\mathscr{C}\). 
Obviously, the weights are  \(\wt(t)=1\), \(\wt(s)=2\), and 
\begin{equation}\label{x_y_by_t_s}
x=t^{-|d|}s^{-|b|}, \ \ \ 
y=t^{-|c|}s^{-|a|}.
\end{equation}
Plugging this into (\ref{plane_miura}), we let
\begin{equation}\label{f_tilde}
\begin{aligned}
\tilde{f}(t,s)&=
f\bigg(\frac{1}{t^{|d|}s^{|b|}},\,\frac{1}{t^{|c|}s^{|a|}}\bigg)\,t^{|c|e}s^{|a|e}\cdot
\begin{cases}
t^2 & (a,b,c,d>0)\\
-s   & (a,b,c,d<0)
\end{cases}\\
&=-t^2+s+\cdots\in\mathbb{Z}[\bvec{\mu}][t,s]. 
\end{aligned}
\end{equation}
In particular, \(\mathrm{wt}\big(\tilde{f}(t,s)\big)=2\).
\par
If \(x\) and \(y\) satisfy \(f(x,y)=0\), then 
the corresponding \(t\) and \(s\) satisfy \(\tilde{f}(t,s)=0\). 
Then, using (\ref{f_tilde}) recursively, \(s\) is expressed as a power series of \(t\) such that 
\begin{equation*}
s=t^2+\cdots\in{t^2}\mathbb{Z}[\bvec{\mu}][[t]]. 
\end{equation*}
Moreover, \(x=x(t)\) and  \(y=y(t)\) are expanded as power series of \(t\);
\begin{equation*}
\begin{aligned}
x(t)&=t^{-e}+\cdots\in{t^{-e}}\mathbb{Z}[\bvec{\mu}][[t]], \\
y(t)&=t^{-q}+\cdots\in{t^{-q}}\mathbb{Z}[\bvec{\mu}][[t]].
\end{aligned}
\end{equation*}

\begin{example}% ************************************************************************************
For the case \((e,q)=(3,4)\), we have the following :
  \begin{equation*}
  \begin{aligned}
  f(x,y)&=
  y^3+(\mu_1x+\mu_4)y^2+(\mu_2x^2+\mu_5x+\mu_8)y-(x^4+\mu_3x^3+\mu_6x^2+\mu_9x+\mu_{12}),\\
%  \ \ \ \  \mbox{(\(\mu_j\) are constants)},\\ 
  \tilde{f}(t,s)&=-t^2+s+(-\mu_3t^3+\mu_1t)s+(-\mu_6t^4+\mu_5t^3+\mu_4t^2)s^2+(-\mu_9t^5+\mu_8t^4)s^3-\mu_{12}t^6s^4,\\
  x(t)&=t^{-3}+\mu_1t^{-2}-\mu_3+\mu_4t+(-\mu_4\mu_1+\mu_5)t^2+(\mu_4{\mu_1}^2-\mu_5\mu_1-\mu_6)t^3+\cdots,\\
  y(t)&=t^{-4}+\mu_1t^{-3}-\mu_3t^{-1}+\mu_4+(-\mu_4\mu_1+\mu_5)t+(\mu_4{\mu_1}^2-\mu_5\mu_1-\mu_6)t^2+\cdots.
  \end{aligned}
  \end{equation*}
\end{example}
%\begin{proposition}
We are going to express our differential forms given by \((x,y)\)-coordinates in terms of \((s,t)\). 
We display in increasing order the integers belonging to \(\{\,me+nq\,|\,m,\,n=0,1,2,3,\cdots\}\) as follows:
\begin{equation*}
0=m_0e+n_0q,\ e=m_1e+n_1q,\ \cdots,\ m_je+n_jq,\ \cdots.
\end{equation*}
It is well-known that \(1\), \(2g-1\), and all the integers greater than or equal to \(2g\) appear in this sequence, and
exactly \(g\) integers less than \(2g\) appear. 
Let 
\begin{equation}\label{w_gaps}
w_1(=1), \ \ w_2, \ \cdots, \ \ w_g(=2g-1)
\end{equation}
be the subsequence in increasing order of the integers less than \(2g\). 
These are exactly the Weierstrass gaps at the point \(\infty\) of \(\infty\in\mathscr{C}\). 
For \(j=0\), \(\cdots\), \(g-1\), we define
\begin{equation}\label{omega}
\omega_{w_{g-j}}(t)=\omega_{w_{g-j}}(x,y)=
-\frac{x^{m_j}y^{n_j}dx}{f_y(x,y)}=(t^{w_{g-j}-1}+\cdots)dt\in{t^{w_{g-j}-1}}\mathbb{Z}[\bvec{\mu}][[t]]dt. 
\end{equation}
Then, \(\mathrm{wt}\big(\omega_{w_{g-j}}(t)\big)=w_{g-j}\) (we set \(\mathrm{dt}=1\)), and 
the set of these forms makes a base of the space of the differential forms of the first kind. 
\par
In order to express \(\omega_{w_j}\) in terms of \(t\) and \(s\) algebraically, 
we use notation 
\begin{equation*}
\left\{\begin{matrix}w\\v\end{matrix}\right\}
=\begin{cases}
\,w & (a,b,c,d>0)\\
\,v & (a,b,c,d<0)\\
\end{cases}.
\end{equation*}
From 
\begin{equation*}
f(x,y)=\tilde{f}(t,s)\,t^{-|c|e}s^{-|a|e}
\cdot\left\{\begin{matrix}-t^{-2} \\ s^{-1}\end{matrix}\right\},
\end{equation*}
we see that
\begin{equation*}
\begin{aligned}
f_y(x,,y)
%&=\pm\,\tilde{f}_s(t,s)\,\frac{t^{-|c|e}s^{-|a|e}}{t^{-|c|-|d|-1}s^{-|a|-|b|-1}}\,\frac{dx}{dt}
%\cdot\left\{\begin{matrix}t^2 \\ s\end{matrix}\right\}\\
&=\tilde{f}_s(t,s)\,t^{-|c|e+|c|+|d|+1}s^{-|a|e+|a|+|b|+1}\,\frac{dx}{dt}
\cdot\left\{\begin{matrix}t^{-2} \\ s^{-1}\end{matrix}\right\}.
\end{aligned}
\end{equation*}
Therefore, we have
\begin{equation}\label{f_tilde_form}
\frac{dx}{f_y(x,y)}=\frac{t^{|c|e-|c|-|d|-1}s^{|a|e-|a|-|b|-1}}{\tilde{f}_s(t,s)}
\cdot\left\{\begin{matrix} t^2 \\ s\end{matrix}\right\}\,dt. 
\end{equation}
\begin{example}% **************************************************************************************
In the case \((e,q)=(3,4)\), we have
\begin{equation*}
\begin{aligned}
\omega_5(x,y)&=-\frac{dx}{f_y}=\frac{st^2dt}{\tilde{f}_s}\\
&=\big(t^4-2\mu_1t^5+(3{\mu_1}^2-2\mu_2)t^6+(-4{\mu_1}^3+6\mu_2\mu_1+2\mu_3)t^7+\cdots\big)dt,\\
%+(5{\mu_1}^4-12\mu_2{\mu_1}^2-6\mu_3\mu_1+3\mu_4+3{\mu_2}^2)t^8
% &\quad+(6{\mu_1}^5-20\mu_2{\mu_1}^3-12\mu_3{\mu_1}^2+(-12\mu_4+12{\mu_2}^2)\mu_1+6\mu_3\mu_2+3\mu_5)t^9\\
% &\quad+(-7\mu_1^6+30\mu_2\mu_1^4+20\mu_3{\mu_1}^3+(30\mu_4-30{\mu_2}^2){\mu_1}^2-(24\mu_3\mu_2+12\mu_5)\mu_1\\
% &\qquad\qquad-12\mu_2\mu_4+4{\mu_2}^3-3\mu_3^2-3\mu_6)t^{10}\\
% &\quad+(8{\mu_1}^7-42\mu_2{\mu_1}^5-30\mu_3{\mu_1}^4+(-60\mu_4+60{\mu_2}^2){\mu_1}^3+(60\mu_3\mu_2+30\mu_5){\mu_1}^2\\
% &\qquad\qquad+(60\mu_2\mu_4-20\mu_2^3+12\mu_3^2+12\mu_6)\mu_1
% +12\mu_3\mu_4-12\mu_3{\mu_2}^2-12\mu_5\mu_2)t^{11}+\cdots,\\
\omega_2(x,y)&=-\frac{xdx}{f_y}=\frac{tdt}{\tilde{f}_s}\\
&=\big(t-\mu_1t^2+{\mu_1}^2t^3+(-{\mu_1}^3+\mu_3)t^4+\cdots\big)dt,\\
\omega_1(x,y)&=-\frac{ydx}{f_y}=\frac{dt}{\tilde{f}_s}\\
&=\big(1-\mu_1t+({\mu_1}^2-\mu_2)t^2+(-{\mu_1}^3+2\mu_2\mu_1+\mu_3)t^3+\cdots\big)dt.\\
% &+(-{\mu_1}^4+3\mu_2{\mu_1}^2+2\mu_3\mu_1+2\mu_4-{\mu_2}^2)t^4\\
% &+({\mu_1}^5-4\mu_2{\mu_1}^3-3\mu_3{\mu_1}^2+(-6\mu_4+3{\mu_2}^2)\mu_12\mu_3\mu_2+2\mu_5)t^5\\
% &+(-{\mu_1}^6+5\mu_2{\mu_1}^4+4\mu_3{\mu_1}^3+(12\mu_4-6{\mu_2}^2){\mu_1}^2
% +(-6\mu_3\mu_2-6\mu_5)\mu_1-6\mu_2\mu_4+{\mu_2}^3-{\mu_3}^2-2\mu_6)t^6+\cdots.
\end{aligned}
\end{equation*}
\end{example}
%\end{proposition}
\vskip 25pt
\section{Klein's fundamental 2-form}\label{the_klein_2-form}%%%%%%%%%%%%%%%%%%%%%%%%%%%%%%%%%%%%%%%%%%%%%%%%
%%%%%%%%%%%%%%%%%%%%%%%%%%%%%%%%%%%%%%%%%%%%%%%%%%%%%%%%%%%%%%%%%%%%%%%%%%%%%%%%%%%%%%%%%%%%%%%%%%%%%%%%%%%%%%
%\noindent
In this section we define {\it Klein's fundamental \(2\)-form}, and investigate its Hurwitz integrality. 
Klein's fundamental \(2\)-form is useful for computing a natural symplectic base of the vector space
\begin{equation}\label{h1}
H^1(\mathscr{C},\mathbb{C})\simeq 
\displaystyle\varinjlim_n H^0(\mathscr{C},d\mathscr{O}(n\infty))\big/\varinjlim_n dH^0(\mathscr{C},\mathscr{O}(n\infty))
\end{equation}
(see \cite{nakayashiki_2008} or \cite{lect_chuo}). 
\begin{definition}\label{3.2}{\rm
Take two variable points  \((x,y)\) and \((z,w)\) on \(\mathscr{C}\). 
For the arithmetic local parameter \(t\), let \(t_1\) and \(t_2\) be 
its values at these points, and define
\begin{equation}\label{bracket}
[t_1,t_2]dt_1=-\omega_{w_g}(t_1)\frac{1}{x-z}\frac{f(Z,y)-f(Z,w)}{y-w}\Big|_{Z=z},
\end{equation}
where  \(Z\) is an indeterminate.
%Note that, in this definition, we use the form \(\omega_{w_g}(t)\) that is defined as its power series expansion 
%is of the form  \((t^{w_g-1}+\cdots)dt\) (the minus sign in (\ref{omega})). 
}
\end{definition}
% {\color{blue} Should be explained that this form corresponds intersection form of \(H^1\).}
\begin{proposition}\label{def_eta}% -----------------------------------------------------------------------
We use the notation {\rm (\ref{bracket})}. 
For \(i=1\), \(\cdots\), \(g\), there exists a form of the second kind \(\eta_{-w_i}(t)\) of weight \(-w_i\) 
in \(\dfrac{dt}{t^{w_i+1}}\mathbb{Z}[\bvec{\mu}][[t]]\) satisfying, for 
\begin{equation*}
\bvec{\xi}(t_1,t_2)=\frac{d}{dt_2}[t_1,t_2]dt_1dt_2+\sum_{i=1}^g\omega_{w_i}(t_1)\,\eta_{-w_i}(t_2), 
\end{equation*}
%\begin{equation}
the following two equations:
\begin{align}
\bvec{\xi}(t_1,t_2)
&\in\frac{dt_1dt_2}{(t_2-t_1)^2}+\mathbb{Z}[\bvec{\xi}][[t_1,t_2]]dt_1dt_2, \label{klein}\\
\bvec{\xi}&(t_1,t_2)=\bvec{\xi}(t_2,t_1). \label{klein2}
\end{align}
%\end{equation}
\(\bvec{\xi}(t_1,t_2)\) is homogeneous of weight \(2\). 
Those \ \(\eta_{-w_i}(t)\) \ are not unique. 
\end{proposition}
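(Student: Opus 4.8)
The plan is to produce the form $\bvec{\xi}(t_1,t_2)$ by a direct Laurent-expansion analysis of the bracket $[t_1,t_2]dt_1$ defined in (\ref{bracket}), and then to correct its non-symmetric, non-holomorphic defects by subtracting a finite combination $\sum_{i=1}^g \omega_{w_i}(t_1)\eta_{-w_i}(t_2)$. First I would expand $[t_1,t_2]dt_1$ as a Laurent series in $t_1$ and $t_2$ with coefficients in $\mathbb{Z}[\bvec{\mu}]$, using the expansions $x=x(t)$, $y=y(t)$ of Section \ref{arith_local_param}, all of which lie in $t^{-e}\mathbb{Z}[\bvec{\mu}][[t]]$, $t^{-q}\mathbb{Z}[\bvec{\mu}][[t]]$ respectively, together with $\omega_{w_g}(t_1)\in t^{w_g-1}\mathbb{Z}[\bvec{\mu}][[t]]\,dt_1$. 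The factor $\dfrac{1}{x-z}$, with $x=x(t_1)$, $z=z(t_2)$, expands (for $|t_1|$ small relative to $|t_2|$, say, or formally after clearing the leading term) as $t_1^{e}$ times a unit power series in $t_1$ with coefficients that are power series in $t_2$ over $\mathbb{Z}[\bvec{\mu}]$, modulo a principal part in $t_2$; the quotient $\dfrac{f(Z,y)-f(Z,w)}{y-w}\big|_{Z=z}$ is a polynomial in $z,y,w$, hence a Laurent series in $t_1,t_2$ over $\mathbb{Z}[\bvec{\mu}]$. The key structural point, which is classical for Klein's form, is that the only terms of $\dfrac{d}{dt_2}[t_1,t_2]dt_1dt_2$ that obstruct membership in $\mathbb{Z}[\bvec{\mu}][[t_1,t_2]]\,dt_1dt_2$ are: (i) the principal part $\dfrac{dt_1dt_2}{(t_2-t_1)^2}$, which we allow; and (ii) finitely many terms of the shape $\omega_{w_i}(t_1)\cdot(\text{Laurent tail in }t_2)$ with a pole only in $t_2$ — these are absorbed by choosing $\eta_{-w_i}(t_2)\in \dfrac{dt_2}{t_2^{w_i+1}}\mathbb{Z}[\bvec{\mu}][[t_2]]$ appropriately.

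Concretely, the key steps are: (1) show $[t_1,t_2]dt_1 - \dfrac{dt_1}{t_1-t_2}\in$ (something holomorphic-in-$t_1$ with controlled $t_2$-behaviour), by verifying that $[t_1,t_2]$, as a function of the first point, has a simple pole exactly at $t_1=t_2$ with residue $1$ and no other pole — this is where one uses that $f(Z,y)-f(Z,w)$ vanishes to first order as the point $(z,w)$ meets $(x,y)$, i.e. $\dfrac{1}{x-z}\cdot\dfrac{f(Z,y)-f(Z,w)}{y-w}\big|_{Z=z}\to -f_y(x,y)/(x-z)\cdot(\dots)$ produces the polar part cancelled against $\omega_{w_g}$; (2) differentiate in $t_2$ and collect, for each Weierstrass gap $w_i$, the coefficient of $\omega_{w_i}(t_1)$ in the part of $\dfrac{d}{dt_2}[t_1,t_2]dt_1 dt_2$ that is polar in $t_2$ and holomorphic in $t_1$ — since $\{\omega_{w_i}(t_1)\}$ is a basis of holomorphic forms with $\omega_{w_i}(t_1)=(t_1^{w_i-1}+\cdots)dt_1$, the "holomorphic in $t_1$" part decomposes uniquely along this basis up to exact forms, and the $t_2$-polar coefficients are the $\eta_{-w_i}$; (3) set $\bvec{\xi}$ as in the statement and check (\ref{klein}): after the subtraction, both the $t_1$-polar and $t_2$-polar defects are gone except for $(t_2-t_1)^{-2}\,dt_1dt_2$, which one identifies from the residue-$1$ simple pole computed in step (1) after differentiation; (4) check symmetry (\ref{klein2}): argue that $\bvec{\xi}$ is, up to exact forms in each variable, the canonical symmetric bidifferential of the second kind on $\mathscr{C}$ — equivalently, verify that $\bvec{\xi}(t_1,t_2)-\bvec{\xi}(t_2,t_1)$ is simultaneously holomorphic in both variables (all principal parts having been matched) and represents zero in $H^1\otimes H^1$, hence is identically zero, or more hands-on, that the asymmetry lies in $\sum_i(\omega_{w_i}(t_1)\eta_{-w_i}(t_2)-\omega_{w_i}(t_2)\eta_{-w_i}(t_1))$ which can be forced to vanish by the standard Riemann-bilinear normalization of the second-kind forms; (5) track weights throughout: $\wt(t)=1$, $\wt\,dt=0$ (as declared), $\wt(\omega_{w_i}(t))=w_i$, so $\wt(\eta_{-w_i}(t))=-w_i$ and $\wt(\bvec{\xi})=2$ follow by homogeneity of every expansion over $\mathbb{Z}[\bvec{\mu}]$; (6) note non-uniqueness: any $\eta_{-w_i}\mapsto \eta_{-w_i}+\sum_j c_{ij}\omega_{w_j}$ with $c_{ij}\in\mathbb{Z}[\bvec{\mu}]$, $c_{ij}=c_{ji}$, preserves (\ref{klein}) and (\ref{klein2}), which is exactly the freedom asserted.

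The main obstacle is step (1)–(2): making rigorous, with full control of $\mathbb{Z}[\bvec{\mu}]$-integrality, the claim that the only $t_2$-polar, $t_1$-holomorphic part of $\dfrac{d}{dt_2}[t_1,t_2]$ is a $\mathbb{Z}[\bvec{\mu}]$-combination of the $\omega_{w_i}(t_1)$ with coefficients in $t_2^{-w_i-1}\mathbb{Z}[\bvec{\mu}][[t_2]]\,dt_2$, and not something with worse denominators. The delicate input is that $\dfrac{1}{x(t_1)-z(t_2)}$, although naively introducing a denominator $x(t_1)-z(t_2)=t_1^{-e}-t_2^{-e}+\cdots$, actually contributes only the single simple pole at $t_1=t_2$ (all other would-be poles being spurious because $x$ is the $x$-coordinate of a genus-$g$ curve with a single point at infinity, so $x(t_1)=x(t_2)$ forces, generically, $t_1=t_2$), and that the resulting expansion has coefficients in $\mathbb{Z}[\bvec{\mu}]$ because $\tilde f_s(t,s)$ is a unit in $\mathbb{Z}[\bvec{\mu}][[t,s]]$ (from $\tilde f=-t^2+s+\cdots$, we get $\tilde f_s=1+\cdots$). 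Once the polar structure is pinned down with integral coefficients, steps (3)–(6) are formal bookkeeping — matching principal parts, homogeneity, and the standard symmetry normalization — so the real work is the integral Laurent analysis of the bracket.
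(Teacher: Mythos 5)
Your overall skeleton (expand the bracket, subtract \(\sum_i\omega_{w_i}(t_1)\eta_{-w_i}(t_2)\), then symmetrize, then track weights and non-uniqueness) is the same as the paper's, but the two steps you yourself flag as the crux are not carried out correctly. First, your justification of the pole structure of \([t_1,t_2]\) rests on the claim that the other zeros of \(x(t_1)-x(t_2)\) are ``spurious because \(x(t_1)=x(t_2)\) forces, generically, \(t_1=t_2\)''. That is false: \(x\) has degree \(e\) on \(\mathscr{C}\), and \(x(t_1)-x(t_2)\) vanishes along all \(e\) branches \(t_1=t_2^{(j)}\), \(t_2^{(j)}=1^{1/e\,j}t_2+\cdots\). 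The actual mechanism (and the paper's argument) is that, since \(f(z,w)=0\), the numerator \(\frac{f(Z,y)-f(Z,w)}{y-w}\big|_{Z=z}=f(z,y)/(y-w)=\prod_{j=1}^{e-1}\bigl(y(t_1)-y(t_2^{(j)})\bigr)\), so each conjugate-branch zero of the denominator is cancelled by a factor of the numerator; \(\mathbb{Z}[\bvec{\mu}]\)-integrality of the resulting expansion then comes from the Weierstrass preparation factorization \(x(t_1)^{-1}-x(t_2)^{-1}=(t_1-t_2)\prod_j(t_1-t_2^{(j)})\,p(t_1,t_2)\) with \(p\in1+(t_1,t_2)\mathbb{Z}[\bvec{\mu}][[t_1,t_2]]\) a unit, together with integral control of each quotient \(\bigl(y(t_1)-y(t_2^{(j)})\bigr)/(t_1-t_2^{(j)})\). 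Your appeal to \(\tilde f_s\) being a unit does not substitute for this; without the conjugate-branch cancellation your Laurent analysis does not even establish that the only \(t_1\)-pole is at \(t_1=t_2\).

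Second, the \(\eta_{-w_i}\) must be forms of the second kind \emph{on the curve} (they are later integrated over cycles to produce \(\eta',\eta''\)), so it is not enough to read off ``Laurent tails in \(t_2\)'': one must realize the \(t_2\)-polar, \(t_1\)-holomorphic part as a \(\mathbb{Z}[\bvec{\mu}]\)-linear combination of the rational differentials \(x^ky^{\ell}dx/f_y\). The paper does this by solving a finite linear system whose coefficient matrix \(M\) (of size \(w_1+\cdots+w_g\)) is block lower-triangular with unit diagonal, hence invertible over \(\mathbb{Z}[\bvec{\mu}]\); this unimodularity is the real source of the integrality of the \(\eta_{-w_i}\), and nothing in your step (2) replaces it. Finally, your symmetrization: the route ``holomorphic and represents zero in \(H^1\otimes H^1\), hence identically zero'' fails, since after matching principal parts the difference is a holomorphic bidifferential \(\sum a_{ij}\,\omega_{w_i}(t_1)\omega_{w_j}(t_2)\) which in general is nonzero (in the \((3,4)\) example the paper must correct \(\eta_{-5}\)); and the ``Riemann bilinear normalization'' is an analytic normalization that is not shown to preserve \(\mathbb{Z}[\bvec{\mu}]\)-integrality. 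The paper's fix is elementary and integral: the matrix \(a_{ij}\in\mathbb{Z}[\bvec{\mu}]\) is antisymmetric, and replacing \(\eta_{-w_i}\) by \(\eta_{-w_i}-\sum_{j<i}a_{ij}\omega_{w_j}\) restores symmetry without leaving \(\mathbb{Z}[\bvec{\mu}]\). Your non-uniqueness and weight remarks are fine, but the three points above are genuine gaps.
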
% --------------------------------------------------------------------------------------
We call \ \(\bvec{\xi}(t_1,t_2)\) \ above {\it Klein's fundamental \(2\)-form}. 
\begin{remark}
Nakayashiki \cite{nakayashiki_2008} constructed Klein's fundamental \(2\)-form analytically 
by using prime forms which are expressed in terms of first derivatives of a theta function, 
following \cite{fay_1973}. 
However, in view of our aim, we shall construct it algebraically. 
\end{remark}
\begin{proof} 
We fix an primitive \(e\)-th root of \(1\) and denote it by \(1^{1/e}\). 
Take the pull-back images of the function \(t\mapsto{x(t)}\), and denote them by  \(t,\ti{1},\ti{2},\cdots,\ti{e-1}\). 
Precisely speaking, we assume that each \(\ti{j}\) has value such that \(x(t)=x(\ti{j})\) for the given \(t\) and satisfies
\begin{equation*}
\ti{j}=1^{1/e}\,t+\cdots\in\mathbb{Q}[[t]]. 
\end{equation*}
Using the notation of Definition \ref{3.2}, we define
\begin{equation*}%\label{raw_2-form}
{[t_1,\,t_2]}
=-\frac{\omega_{2g-1}(x,y)}{dt_1}\,
 \frac{1}{x-z}\frac{f(Z,y)-f(Z,w)}{y-w}\Big|_{Z=z}.
\end{equation*}
Then
\begin{equation*}% .............................................................................
\lim_{t_2\to t_1}\frac{t_1-t_2}{x-z}\frac{f(Z,y)-f(Z,w)}{y-w}\Big|_{Z=z}
=\frac{f_y(x,y)}{\frac{d}{dt}x(t_1)}. 
\end{equation*}% ...............................................................................
So that, we see
\begin{equation}\label{lim_1}
\lim_{t_2\to t_1}(t_1-t_2){[t_1,\ t_2]}=1.
\end{equation}
By Weierstra{\ss} preparation theorem, we have
\begin{equation*}
x(t_1)^{-1}-x(t_2)^{-1}
=(t_1-t_2)(t_1-\tj{1}{2})(t_1-\tj{2}{2})\cdots(t_1-\tj{e-1}{2})\,p(t_1,t_2)
\end{equation*}
with some  \(p(t_1,t_2)\in1+(t_1,t_2)\mathbb{Z}[\bvec{\mu}][[t_1,t_2]]\). 
Therefore, the expansions with respect to \(t_2\) of  \(\tj{1}{2}\), \(\cdots\), \(\tj{e-1}{2}\) are 
all belong to \(\mathbb{Z}[[t_2]]\). 
Moreover, we have
%\begin{equation*}
%\frac{f(Z,y)-f(Z,w)}{y-w}\,\Bigg|_{Z=z}
%=\big(y(t_1)-y(\tj{1}{2})\big)
%\big(y(t_1)-y(\tj{2}{2})\big)\cdots
%\big(y(t_1)-y(\tj{e-1}{2})\big)\in
%\end{equation*}
\begin{equation*}
\begin{aligned}
 \frac{1}{x-z}&\frac{f(Z,y)-f(Z,w)}{y-w}\Big|_{Z=z}\\
&= \frac{-(xz)^{-1}}{x^{-1}-z^{-1}}\frac{f(Z,y)-f(Z,w)}{y-w}\Big|_{Z=z}\\
&=\frac{1}{t_1-t_2}\cdot
\frac{y(t_1)-y(\tj{1}{2})}{t_1-\tj{1}{2}}\cdot
\frac{y(t_1)-y(\tj{2}{2})}{t_1-\tj{2}{2}}\cdots
\frac{y(t_1)-y(\tj{e-1}{2})}{t_1-\tj{e-1}{2}}\\
&\hskip 200pt\cdot\,(-x(t_1)x(t_2))^{-1}\,p(t_1,p_2)^{-1}. 
\end{aligned}
\end{equation*}
Here the middle \(e-1\) factors above should be reduced by Weierstra{\ss} preparation theorem. 
This means that, for \(j=1\), \(2\), \(\cdots\), \(e{-}1\), 
\begin{equation*}
\frac{y(t_1)-y(\tj{j}{2})}{t_1-\tj{j}{2}}
\in\,\bigcup_{r=1}^q{t_1}^{-r}({\tj{j}{2}})^{-q+r-1}\,\mathbb{Z}[\bvec{\mu}][[t_1,\tj{j}{2}]].
\end{equation*}
Consequently, we see
\begin{equation}\label{expansion1}
(t_2-t_1)\frac{1}{x-z}\frac{f(Z,y)-f(Z,w)}{y-w}\Big|_{Z=z}
\in\,{t_1}^e{t_2}^e\bigcup_{r=e-1}^{q(e-1)}{t_1}^{-r}{t_2}^{-(q-1)(e-1)+r}\,\mathbb{Z}[\bvec{\mu}][[t_1,\tj{j}{2}]].
\end{equation}
Since
\begin{equation*}
\omega_{2g-1}(t_1)\in{t_1}^{(q-1)(e-1)-1}\mathbb{Z}[\bvec{\mu}][[t_1]]dt_1,
\end{equation*}
(\ref{lim_1}) and (\ref{expansion1}) imply that
\begin{equation}\label{expansion2}
\begin{aligned}
(t_2-t_1)&\frac{\omega_{2g-1}(t_1)}{dt_1}\frac{1}{x-z}\frac{f(Z,y)-f(Z,w)}{y-w}\Big|_{Z=z}\\
&\in\,1+(t_2-t_1)\!\!\bigcup_{r=-1}^{(q-1)(e-1)+1}{t_1}^{r}{t_2}^{2-r}\,\mathbb{Z}[\bvec{\mu}][[t_1,\tj{j}{2}]].
\end{aligned}
\end{equation}
The space of differential form on \(\mathscr{C}\) having unique pole only at \(\infty\) at most of order \((w_g+1)\)
are spanned by the forms of first kinds and some other \(w_g\) forms of the second kind. 
Obviously, those are of the form
\begin{equation}\label{diff_forms}
\frac{x^ky^{\ell}}{f_y(x,y)}dx \ \ \ \ (k,\ \ell=0,\,1,\,2,\cdots). 
\end{equation}
\par
From the result above, 
\(\frac{d}{dt_2}[t_1,\ t_2]dt_1dt_2\), as a form of \(t_1\), has 
pole at \(t_2\) with essential part \(1/(t_1-t_2)^2\), 
and as a form of \(t_2\), has only poles at \(t_1\) and \(0\). 
Accordingly, if we subtract from \(\frac{d}{dt_2}[t_1,\ t_2]dt_1dt_2\) a \(2\)-form which is a bi-linear combination 
\begin{equation}\label{substractions}
\sum_{i=1}^g\omega_{w_i}(t_1)\,\eta_{-w_i}(t_2)
\end{equation}
of forms in (\ref{diff_forms}) over \(\mathbb{Z}[\bvec{\mu}]\)
and some forms \(\eta_{-w_i}(t)\) of the second kind with a pole at \(t=0\) of degree \(-(w_i-1)\)) and no other poles, 
we get a \(2\)-form which has, as a form of \(t_1\), 
a pole at \(t_2\) with its essential part \((t_1-t_2)^{-2}dt_1dt_2\) and no other poles, 
and has, as a form of \(t_2\), the same type pole at \(t_1\) and no other poles. 
Moreover, its coefficients of the expansion should be belong to \(\mathbb{Z}[\bvec{\mu}]\) 
by our argument above. Therefore, it belongs to
\begin{equation*}
\frac{dt_1dt_2}{(t_1-t_2)^2}+\mathbb{Z}[\bvec{\mu}][[t_1,t_2]]dt_1dt_2. 
\end{equation*}
Since the 2-form  \(\frac{d}{dt_2}[t_1,t_2]dt_1dt_2\)  is of homogeneous weight 
with respect to  \(\{\mu_j\}\), \(t_1\), and \(t_2\),  
the 2-form  (\ref{substractions}) should be of homogeneous weight. 
\par
Now, we define a square matrix \(M\) of size \(w_g+w_{g-1}+\cdots+w_1\) as follows. 
For each \(i=g\), \(\cdots\), \(1\), we take \(w_i\) numbers 
\(j=g+w_i-1\), \(g+w_i-2\), \(\cdots\), \(g\), and make the pairs \(\{(i,j)\}\) be the numbering of columns of \(M\). 
For each  \(k=g\), \(\cdots\), \(1\), we take \(w_k-2\) numbers 
\(\ell=-w_k-1\), \(-w_k\), \(\cdots\), \(-2\), and make the pairs \((k,\ell)\) be the numbering of lows of  \(M\). 
We define
\begin{equation*}
M=\big[\,C_{i,j;k,\ell}\,\big],
\end{equation*}
where  \(C_{i,j;k,\ell}\) is the coefficient of \({t_1}^{w_k-1}\,{t_2}^{-\ell}\) in the expansion of 
\(\omega_{w_i}(t_1)\frac{x^{m_j}y^{n_j}}{f_y}\frac{dx}{dt}(t_2)\). 
If we take square matrices, say \(M_j\), of size \(w_j\) diagonally from  \(M\) successively \(w_j\), 
the entries below of these matrices are all \(0\), and each \(M_j\) is a lower triangular matrix. \\
(The proof is continued.)
\end{proof}
We shall illustrate above argument by an example. 
It will be better the rest of proof also is explained through this example. 
\begin{example}
If  \((e,q)=(3,4)\), then
\begin{equation*}
\begin{aligned}
{[}t_2,\ &t_1{]} + \frac1{t_2-t_1}\\
=&-\frac{1}{t_1}\\
&+\big(
{t_1}^4
-2{\mu_1}\,{t_1}^5+\cdots
\big)\frac{1}{{t_2}^5}\\
&+\big(
2{\mu_1}\,{t_1}^4
-4{\mu_1}^2{t_1}^5+\cdots)\frac{1}{{t_2}^4}\\
&+\big(
({\mu_1}^2+2{\mu_2}){t_1}^4
-(2{\mu_1}^3+4{\mu_2}{\mu_1}){t_1}^5+\cdots
\big)\frac{1}{{t_2}^3}\\
&+\big(
{t_1}
-{\mu_1}\,{t_1}^2
+({\mu_1}^2-{\mu_2}){t_1}^3
+(-{\mu_1}^3+4{\mu_2}{\mu_1}){t_1}^4\\
&\hskip 70pt
+({\mu_1}^4-7{\mu_2}{\mu_1}^2-2{\mu_4}+{\mu_2}^2){t_1}^5+\cdots
\big)\frac{1}{{t_2}^2}\\
&+\big(
1
-{\mu_2}\,{t_1}^2
+({\mu_2}{\mu_1}+{\mu_3}){t_1}^3
+(-{\mu_2}{\mu_1}^2-2{\mu_3}{\mu_1}+2{\mu_2}^2){t_1}^4\\
&\hskip 70pt 
+({\mu_2}{\mu_1}^3+3{\mu_3}{\mu_1}^2-4{\mu_2}^2{\mu_1}-2{\mu_3}{\mu_2}-2{\mu_5}){t_1}^5+\cdots
\big)\frac{1}{t_2}\\
&+\big(
{\mu_1}
+(-{\mu_1}^2+{\mu_2}){t_1}
+({\mu_1}^3-2{\mu_2}{\mu_1}){t_1}^2+\cdots
\big)\\
&+\big(
-{\mu_4}\,{t_1}^2
+(2{\mu_4}{\mu_1}-{\mu_5}){t_1}^3
+(-3{\mu_4}{\mu_1}^2+2{\mu_5}{\mu_1}+2{\mu_2}{\mu_4}){t_1}^4+\cdots
\big){t_2}\\
&+\cdots.
\end{aligned}
\end{equation*}
Differentiating this with respect to \(t_2\), we have
\begin{equation}\label{diff_t2}
\begin{aligned}
\frac{d}{dt_2}&{[t_1,t_2]}-\frac1{(t_2-t_1)^2}\\
&=\Big(-5{t_1}^4
+10{\mu_1}\,{t_1}^5+\cdots
\Big)\frac{1}{{t_2}^6}\\
&+\Big(
-8{\mu_1}\,{t_1}^4
+16{\mu_1}^2\,{t_1}^5+\cdots
\Big)\frac{1}{{t_2}^5}\\
&+\Big(
-(3{\mu_1}^2+6{\mu_2}){t_1}^4
+(6{\mu_1}^3+12{\mu_2}{\mu_1}){t_1}^5+\cdots
\Big)\frac{1}{{t_2}^4}\\
&+\Big(
-2\,{t_1}^1
+2{\mu_1}\,{t_1}^2
+(-2{\mu_1}^2+2{\mu_2}){t_1}^3
+(2{\mu_1}^3-8{\mu_2}{\mu_1}){t_1}^4\\
&\ \ \ \ +(-2{\mu_1}^4+14{\mu_2}{\mu_1}^2-2{\mu_2}^2+4{\mu_4}){t_1}^5+\cdots
\Big)\frac{1}{{t_2}^3}\\
&+\Big(
-{t_1}^0
+({\mu_2}){t_1}^2
-({\mu_2}{\mu_1}+{\mu_3}){t_1}^3
+({\mu_2}{\mu_1}^2+2{\mu_3}{\mu_1}-2{\mu_2}^2){t_1}^4\\
&\ \ \ \ +(-{\mu_2}{\mu_1}^3-3{\mu_3}{\mu_1}^2+4{\mu_2}^2{\mu_1}+2{\mu_3}{\mu_2}+2{\mu_5}){t_1}^5+\cdots
\Big)\frac{1}{{t_2}^2}\\
&+\Big(
-{\mu_4}\,{t_1}^2
+(2{\mu_4}{\mu_1}-{\mu_5}){t_1}^3
+(-3{\mu_4}{\mu_1}^2+2{\mu_5}{\mu_1}+2{\mu_4}{\mu_2}){t_1}^4\\
&\ \ \ \ \ +(4{\mu_4}{\mu_1}^3-3{\mu_5}{\mu_1}^2-6{\mu_4}{\mu_2}{\mu_1}+2{\mu_5}{\mu_2}-2{\mu_4}{\mu_3}){t_1}^5+\cdots
\Big)\\
&+\cdots.
\end{aligned}
\end{equation}
\par
The forms in a base of the space of forms of the second kind and their expansions with respect to 
the arithmetic parameter \(t\) with \((x,y)=(x(t),y(t))\) are as follows:
\begin{equation}\label{simplicity}
\begin{aligned}
\frac{x^2y}{f_y(x,y)}dx&=\Big(\frac{-1}{t^6}-\frac{\mu_1}{t^5}-\frac{\mu_2}{t^4}+\frac{\mu_3}{t^3}+(-{\mu_4}^2+{\mu_8})t^2+\cdots\Big)\,dt\,\in\frac{1}{t^6}\mathbb{Z}[\bvec{\mu}][[t]]dt,\\
\frac{x^3}{f_y(x,y)}dx&=\Big(\frac{-1}{t^5}-\frac{\mu_1}{t^4}-\frac{\mu_2}{t^3}+\frac{\mu_3}{t^2}+(-{\mu_4}^2+{\mu_8})t^3+\cdots\Big)\,dt\,\in\frac{1}{t^5}\mathbb{Z}[\bvec{\mu}][[t]]dt,\\
\frac{y^2}{f_y(x,y)}dx&=\Big(\frac{-1}{t^4}+{\mu_4}+(-2{\mu_4}{\mu_1}+{\mu_5})t+\cdots \Big)\,dt\,\in\frac{1}{t^4}\mathbb{Z}[\bvec{\mu}][[t]]dt,\\
\frac{xy}{f_y(x,y)}dx&=\Big(-\frac{-1}{t^3}+{\mu_4}t+(-2{\mu_4}{\mu_1}+{\mu_5})t^2+\cdots  \Big)\,dt\,\in\frac{1}{t^3}\mathbb{Z}[\bvec{\mu}][[t]]dt,\\
\frac{x^2}{f_y(x,y)}dx&=\Big(-\frac{1}{t^2}+{\mu_4}t^2+\cdots  \Big)\,dt\,\in\frac{1}{t^2}\mathbb{Z}[\bvec{\mu}][[t]]dt. 
\end{aligned}
\end{equation}
\end{example}
\par
We want, by choosing suitable  \(\{\,a_j,\ b_j,\ c_j\,\}\) such that
\begin{equation}\label{abc}
\begin{aligned}
\eta_{-5}(t)&=
\frac{
a_0x^2y
+a_1x^3
+a_2y^2
+a_3xy
% -(\mu_6+\mu_4\mu_2)y
+a_4x^2
% -(\mu_1\mu_6+\mu_2*\mu_5+\mu_3\mu_4)x)
}{f_y(x,y)}dx,\\
\eta_{-2}(t)&=
\frac{
b_0y^2
+b_1xy
}{f_y(x,y)}dx,\\
\eta_{-1}(t)&=
\frac{
c_0xy
}{f_y(x,y)}dx
\end{aligned}
\end{equation}
satisfy (\ref{klein}). 
Here each of \(a_j\), \(b_j\), and \(c_j\) is an element in \(\mathbb{Z}[\bvec{\mu}]\) of 
weight \(-j\) or equals to \(0\). 
These elements are obtained as a set of solutions of some linear equation 
whose coefficient matrix is \(M\) in the proof. 
The matrix \(M\) is given by the following table. 
\\
{\scriptsize
\begin{equation*}
\hskip -50pt
\begin{array}{|c|ccccc|cc|c|}
\hline\\[-1.3em]
\struth
\multirow{2}*{\mbox{\begin{minipage}{65pt}{\strutHigh\ \ \ expansion in \(t_1\)\\\(\times\) expansion in \(t_2\)}\end{minipage}}} 
                               & \multicolumn{5}{c|}{\frac{1}{f_y}\frac{dx}{dt}(t_1)\,\cdot}     & \multicolumn{2}{c|}{\frac{ x}{f_y}\frac{dx}{dt}(t_1)\cdot}  & \frac{ y}{f_y}\frac{dx}{dt}(t_1)\cdot \\[0pt]
                               & \frac{x^2y}{f_y}\frac{dx}{dt}(t_2) & \frac{x^3}{f_y}\frac{dx}{dt}(t_2) & \frac{y^2}{f_y}\frac{dx}{dt}(t_2) & \frac{xy}{f_y}\frac{dx}{dt}(t_2) & \frac{x^2}{f_y}\frac{dx}{dt}(t_2) & \frac{xy}{f_y}\frac{dx}{dt}(t_2) & \frac{x^2}{f_y}\frac{dx}{dt}(t_2) & \frac{xy}{f_y}\frac{dx}{dt}(t_2) \\[5pt]\hline\\[-1.34em]
\struth\mbox{coeff. of \ \({{t_1}^4}{{t_2}^{-6}}\)}\ \hfill &                1 &               0 &               0 &              0 &               0 & \strutHigh                 0 &                            0 &                                                     0 \\[0pt]
\struth\mbox{coeff. of \ \({{t_1}^4}{{t_2}^{-5}}\)}\,\hfill &            \mu_1 &               1 &               0 &              0 &               0 &                            0 &                            0 &                                                     0 \\[0pt]
\struth\mbox{coeff. of \ \({{t_1}^4}{{t_2}^{-4}}\)}\,\hfill &            \mu_2 &           \mu_1 &               1 &              0 &               0 &                            0 &                            0 &                                                     0 \\[0pt]
\struth\mbox{coeff. of \ \({{t_1}^4}{{t_2}^{-3}}\)}\,\hfill &           -\mu_3 &           \mu_2 &               0 &              1 &               0 & -{\mu_1}^3{+}2\mu_2\mu_1{+}\mu_3 &                            0 &                                                     0 \\
\struth\mbox{coeff. of \ \({{t_1}^4}{{t_2}^{-2}}\)}\,\hfill &                0 &          -\mu_3 &               0 &              0 &               1 &                            0 & -{\mu_1}^3{+}2\mu_2\mu_1{+}\mu_3 & \mbox{\scriptsize\(\bigg\{\!\!\!\begin{array}{l}{\mu_1}^4{-}3\mu_2{\mu_1}^2\\ \ -2\mu_3\mu_1{-}2\mu_4+{\mu_2}^2\end{array}\)} \\[6pt]\cline{2-8}\\[-1.35em]
\struth\mbox{coeff. of \ \({{t_1}  }{{t_2}^{-3}}\)}\,\hfill &                0 &               0 &               0 &              0 & \strutHigh    0 &                            1 &                            0 &                                                     0 \\
\struth\mbox{coeff. of \ \({{t_1}  }{{t_2}^{-2}}\)}\,\hfill &                0 &               0 &               0 &              0 &               0 &                            0 &                            1 &                                                -\mu_1 \\[0pt]\cline{7-9}\\[-1.35em]
\struth\mbox{coeff. of \ \({{1\,}  }{{t_2}^{-2}}\)}\,\hfill &                0 &               0 &               0 &              0 &               0 &                            0 &                            0 &                                                     1 \\[0pt]
\hline
\end{array}
\end{equation*}}
This table is given as follows. 
For example, we have \(\mu_2\) in the \((4,2)\)-entry.  
This is the coefficient of  \lqq\({t_1}^4{t_2}^{-3}\) in the expansion 
of  \(\frac{1}{f_y}\frac{dx}{dt}(t_1)\) times \(\frac{x^3}{f_y}\frac{dx}{dt}(t_2)\). 
This means that \(\mu_2\) is the coefficient \(-1\) of \({t_1}^4\) in the expansion of \(\frac{1}{f_y}\frac{dx}{dt}(t_1)\) 
times the coefficient \(-\mu_2\) of  \(\frac{1}{{t_2}^4}\) in the expansion of \(\frac{x^3}{f_y}\frac{dx}{dt}(t_2)\). 
\par
Extracting this table as a \(8\times 8\) matrix, we denote it \(M\). 
In general  \(M\) is a square matrix od size  \(w_g+w_{g-1}+\cdots+w_1\). 
\begin{lemma}
The matrix \(M\) belong to the ring of square matrices of size \(w_g+w_{g-1}+\cdots+w_1\) with entries in \(\mathbb{Z}[\bvec{\mu}]\), 
and it is a unit in that ring. 
\end{lemma}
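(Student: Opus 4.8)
\noindent\emph{Proof strategy.}\ The plan is to read off both assertions from the triangular shape of \(M\) that already emerges in the proof of Proposition \ref{def_eta}. First I would settle integrality of the entries. By definition \(C_{i,j;k,\ell}\) is the coefficient of \({t_1}^{w_k-1}{t_2}^{-\ell}\) in \(\omega_{w_i}(t_1)\cdot\frac{x^{m_j}y^{n_j}}{f_y}\frac{dx}{dt}(t_2)\), and this is a product of two Laurent series \emph{in separate variables} whose coefficients are already known to lie in \(\mathbb{Z}[\bvec{\mu}]\): the factor in \(t_1\) by (\ref{omega}), and the factor in \(t_2\) because (\ref{f_tilde_form}), together with \(\tilde f_s(t,s)=1+\cdots\), \(s=s(t)\in t^2\mathbb{Z}[\bvec{\mu}][[t]]\), \(x(t)\in t^{-e}\mathbb{Z}[\bvec{\mu}][[t]]\) and \(y(t)\in t^{-q}\mathbb{Z}[\bvec{\mu}][[t]]\), exhibits each second-kind form (\ref{diff_forms}) as an element of \(t^{-N}\mathbb{Z}[\bvec{\mu}][[t]]\,dt\) for a suitable \(N\). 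Hence \(M\) lies in the matrix ring over \(\mathbb{Z}[\bvec{\mu}]\); and since \(\mathbb{Z}[\bvec{\mu}]^{\times}=\{\pm1\}\) and Cramer's rule then supplies an integral inverse, it suffices to prove \(\det M=\pm1\).

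I would obtain \(\det M\) from the block structure already noted. Because \(\omega_{w_i}(t_1)/dt_1={t_1}^{w_i-1}+O({t_1}^{w_i})\), the \(t_1\)-part of \(C_{i,j;k,\ell}\) vanishes whenever \(w_k<w_i\); grouping the columns and the rows into blocks according to the value \(w_i\), respectively \(w_k\), this makes \(M\) block triangular, with diagonal blocks \(M_i\) inside which the \(t_1\)-part is identically \(1\). Thus the entries of \(M_i\) are just the relevant \(t_2\)-expansion coefficients of the forms \(\frac{x^{m_j}y^{n_j}}{f_y}dx\) making up \(\eta_{-w_i}\), and \(\det M=\prod_i\det M_i\). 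It then remains to see that each \(M_i\) is itself triangular with diagonal entries \(\pm1\).

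The combinatorial input is that the semigroup \(\{\,me+nq\,\}\) contains every integer \(\ge 2g\), while its term indexed \(g\) (counting from \(0=m_0e+n_0q\)) equals \(2g\). Consequently the monomials \(x^{m_j}y^{n_j}\) occurring in \(\eta_{-w_i}\) are precisely the \(w_i\) of them with \(2g\le em_j+qn_j\le 2g+w_i-1\), and for such a monomial the form \(\frac{x^{m_j}y^{n_j}}{f_y}dx\) has a pole at \(\infty\) of order exactly \(em_j+qn_j-(2g-2)\); as \(j\) varies these orders run without repetition through \(2,3,\dots,w_i+1\), which is exactly the list of orders the \(w_i\) rows of \(M_i\) read off. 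Ordering the columns of \(M_i\) by pole order and its rows by the order they extract, every subdiagonal entry asks for the \({t_2}^{-r}\)-coefficient of a form of pole order strictly less than \(r\) and so vanishes, while a diagonal entry is the leading coefficient of \(\frac{x^{m_j}y^{n_j}}{f_y}dx\); since \(x=t^{-e}+\cdots\), \(y=t^{-q}+\cdots\), \(f_y=e\,y^{e-1}+\cdots\) and \(dx=(-e\,t^{-e-1}+\cdots)\,dt\), that leading coefficient is \(\tfrac1e\cdot(-e)=-1\). Hence \(\det M_i=(-1)^{w_i}\), so \(\det M=(-1)^{w_1+\cdots+w_g}=\pm1\), a unit, as claimed.

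I expect the real work to sit in this last step: pinning down exactly which forms \(\frac{x^{m_j}y^{n_j}}{f_y}dx\) get attached to \(\eta_{-w_i}\) in the construction, and verifying that their pole orders at \(\infty\) are the consecutive integers \(2,\dots,w_i+1\). This is the only place where the arithmetic of the Weierstrass semigroup \(\{me+nq\}\) — its term indexed \(g\) being \(2g\), and its complement being the gaps \(w_1,\dots,w_g\) — genuinely enters; everything else is bookkeeping with leading terms of power series already produced in Sections \ref{arith_local_param} and \ref{the_klein_2-form}.
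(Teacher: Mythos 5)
Your proof is correct and takes essentially the same route as the paper's: block structure of \(M\) from the vanishing of the low-order \(t_1\)-coefficients of the first-kind forms, triangularity of each diagonal block \(M_i\) from the pole orders in \(t_2\) of the second-kind forms \(\frac{x^{m_j}y^{n_j}}{f_y}dx\), hence a triangular matrix with unit diagonal. You only make explicit what the paper leaves implicit — that the diagonal entries are the leading coefficients \(\pm1\) (the \(8\times8\) table shows \(+1\) because it uses \(\frac{x^{m}y^{n}}{f_y}\frac{dx}{dt}(t_1)=-\omega_{w_i}(t_1)/dt_1\) as the \(t_1\)-factor, a harmless sign convention), so \(\det M=\pm1\) and \(M\) is a unit over \(\mathbb{Z}[\bvec{\mu}]\).
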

\begin{proof}
The matrix \(M\) is diagonally concatenated by several lower triangular matrices \(\{M_j\}\)
(they are indeed lower triangular because those entries are the coefficients of 
the expansion with respect to \(t_2\) of the corresponding 1-forms of the second kind which are \(0\))
and the entries below the \(M_j\)s are all \(0\)
(because those entries are the coefficients of the expansion with respect to 
\(t_1\) of the corresponding 1-forms of the first kind which are \(0\)). 
Concluding from the argument above, we have proved the lemma. 
\end{proof}
Here, we shall list up first several coefficients from (\ref{diff_t2}):
\begin{equation*}
\bvec{q}
=\left[\ 
\begin{matrix}
\mbox{the coefficient of}\ \frac{{t_1}^4}{{t_2}^6}\\[3pt]
\mbox{the coefficient of}\ \frac{{t_1}^4}{{t_2}^5}\\[3pt]
\mbox{the coefficient of}\ \frac{{t_1}^4}{{t_2}^4}\\[3pt]
\mbox{the coefficient of}\ \frac{{t_1}^4}{{t_2}^3}\\[3pt]
\mbox{the coefficient of}\ \frac{{t_1}^4}{{t_2}^2}\\[3pt]
\mbox{the coefficient of}\ \frac{{t_1}  }{{t_2}^3}\\[3pt]
\mbox{the coefficient of}\ \frac{{t_1}  }{{t_2}^2}\\[3pt]
\mbox{the coefficient of}\ \frac{ 1     }{{t_2}^2}
\end{matrix}\ \right]
=\left[\ 
\begin{matrix}
-5\\[2.5pt]
-8\mu_1\\[2.5pt]
-(3\mu_1^2+6\mu_2)\\[2.5pt]
2(\mu_1^3-4\mu_2\mu_1)\\[2.5pt]
\mu_2{\mu_1}^2+2\mu_3\mu_1-2{\mu_2}^2\\[2.5pt]
-2\\[5pt]
0\mu_1\\[5pt]
-1\\
\end{matrix}
\ \right]
\end{equation*}
Since 
\begin{equation*}
\begin{aligned}
\frac{d}{dt_2}{[t_1,t_2]}dt_1dt_2-&\frac{dt_1dt_2}{(t_2-t_1)^2}
\ \in\ -\omega_5(t_1)\,\frac{a_0{x^2y}+a_1{x^3}+a_2{y^2}+a_3{xy}+a_4{x^2}}{f_y(x,y)}dx(t_2)\\
&-\omega_2(t_1)\,\frac{                            b_0{xy}+b_1{x^2}}{f_y(x,y)}dx(t_2)
-\omega_1(t_1)\,\frac{                                    c_0{x^2}}{f_y(x,y)}dx(t_2)
+\mathbb{Z}[\bvec{\mu}][[t_1,t_2]],
\end{aligned}
\end{equation*}
we have
\begin{equation*}
\left[\ 
\begin{matrix}
a_0\\ a_1\\ a_2\\ a_3\\ a_4\\ b_0\\ b_1\\ c_0
\end{matrix}\ \right]
=M^{-1}\bvec{q}
=
\left[\,
\begin{matrix}
5\\ 3\mu_1\\ \mu_2\\  \mu_2\mu_1+3\mu_3\\  2\mu_3\mu_1+{\mu_2}^2+2\mu_4\\ 2\\ \mu_1\\ 1
\end{matrix}\,
\right].
\end{equation*}
So, we obtained 
\begin{equation*}
\begin{aligned}
\eta_{-5}(t)&=\frac{\ 5{x^2y}+3\mu_1{x^3}+\mu_2{y^2}+\mu_3{xy}+2\mu_1\mu_3{x^2}\ }{f_y(x,y)}\,dx(t)\\
            &=(5t^{-6}+6{\mu_2}t^{-4}-2{\mu_3}t^{-3}+{\mu_2}^2t^{-2}+\mu_6+\cdots)dt,\\
\eta_{-2}(t)&=\frac{\                                    2{xy}+      \mu_1{x^2}\ }{f_y(x,y)}\,dx(t)\\
            &=(2t^{-3}-2{\mu_5}t^2+\cdots)dt,\\
\eta_{-1}(t)&=\frac{\                                                      x^2 \ }{f_y(x,y)}\,dx(t)\\
            &=(t^{-2}-{\mu_5}t^3+\cdots)dt. 
\end{aligned}
\end{equation*}
\begin{proof} \ ({\it continuation.}) 
In the next place, we shall modify the obtained 
\begin{equation*}
\bvec{\xi}(t_1,t_2)=\frac{d}{dt_2}[t_1,t_2]dt_1dt_2+\sum_{i=1}^g\omega_{w_i}(t_1)\,\eta_{-w_i}(t_2)
\in\frac{dt_1dt_2}{(t_2-t_1)^2}+\mathbb{Z}[\bvec{\xi}][[t_1,t_2]]dt_1dt_2
\end{equation*}
in a symmetric form with respect to the exchange for \(t_1\) and \(t_2\). 
For the above \(\bvec{\xi}\), the 2-form on \(\mathscr{C}\)
\begin{equation*}
\bvec{\xi}(t_1,t_2)-\bvec{\xi}(t_2,t_1)
\end{equation*}
is holomorphic because the term \(1/(t_1-t_2)^2\)  is canceled out. 
Therefore, it can be written as
\begin{equation*}
\sum_{i,j=1}^g a_{i,j}\,\omega_{w_i}(t_1)\,\omega_{w_j}(t_2), 
\end{equation*}
where \(a_{i,j}\)s are in \(\mathbb{Z}[\vec]\) with \(a_{i,j}=-a_{j,i}\) (\(\forall i\), \(\forall j\)). 
Especially, \(a_{i,i}=0\). 
Then, since
\begin{equation*}
\begin{aligned}
 \bigg(\bvec{\xi}(t_1,t_2)&-\sum_{i>j}a_{i,j}\,\omega_{w_i}(t_1)\,\omega_{w_j}(t_2)\bigg)
 -\bigg(\bvec{\xi}(t_2,t_1) -\sum_{i>j}a_{i,j}\,\omega_{w_i}(t_2)\,\omega_{w_j}(t_1)\bigg)\\
&=\bigg(\bvec{\xi}(t_1,t_2)-\sum_{i>j}a_{i,j}\,\omega_{w_i}(t_1)\,\omega_{w_j}(t_2)\bigg)
 -\bigg(\bvec{\xi}(t_2,t_1) -\sum_{j>i}a_{j,i}\,\omega_{w_j}(t_2)\,\omega_{w_i}(t_1)\bigg)\\
&=\sum_{i=1}^g a_{i,i}\omega_{w_i}(t_1)\,\omega_{w_i}(t_2)=0, 
\end{aligned}
\end{equation*}
we redefine \(\xi(t_1,t_2)\) as the 2-form
\begin{equation*}
\bvec{\xi}(t_1,t_2)-\sum_{i>j}a_{i,j}\,\omega_{w_i}(t_1)\,\omega_{w_j}(t_2). 
\end{equation*}
This means if we replace \(\eta_{-w_i}(t_2)\) for \(i=1\), \(\cdots\), \(g\) by
\begin{equation*}
\eta_{-w_i}(t_2)-\sum_{i>j}a_{i,j}\,\omega_{w_j}(t_2),
\end{equation*}
all the desired conditions are satisfied. 
\end{proof}
\vskip 10pt
For instance, in the \((3,4)\)-curve case, 
it is necessary to replace only \ \(\eta_{-5}(t)\) \ by 
\begin{equation*}
\begin{aligned}
\eta_{-5}(t)&=\big(\,5{x^2y}+3\mu_1{x^3}+\mu_2{y^2}+\mu_3{xy}+2\mu_1\mu_3{x^2}\\
&\qquad\qquad +(\mu_6+\mu_4\mu_2)y+(\mu_1\mu_6+\mu_2\mu_5+\mu_3\mu_4)x\,\big)\frac{1}{f_y(x,y)}\,dx(t). 
\end{aligned}
\end{equation*}
The finally obtained Klein's fundamental 2-form is 
\begin{equation*}
\begin{aligned}
\bvec{\xi}&((x,y),(z,w))
   =\frac{F(x,y,z,w)dxdz}{(x-z)^2f_y(x,y)f_y(z,w)}\\
  &=\Bigg(\frac{1}{(t_2-t_1)^2}
   +\mu_4{t_1}^2
   +2\mu_4t_2t_1
   +\mu_4{t_2}^2\\
  &\qquad
   +(-2\mu_4\mu_1+\mu_5)({t_1}^3+{t_2}^3)
   +(-4\mu_4\mu_1+2\mu_5)(t_2{t_1}^2+{t_2}^2t_1)
   +\cdots\Bigg)\,dt_1dt_2, 
\end{aligned}
\end{equation*}
where
  \begin{align*}\label{1.03}
  F(&x,y,z,w)=
   3{}y^2{}w^2-2{}(x{}y{}z^3+x^3{}z{}w)+(x^2{}z^2{}w+x^2{}y{}z^2)
   +({\mu_8}{}{\mu_4}+3{}{\mu_{12}}){}(y+w)\\
  &+{\mu_8}{}(y^2+w^2)+({\mu_{12}}{}{\mu_1}+{\mu_9}{}{\mu_4}+{\mu_8}{}{\mu_5}){}(x+z)
   +({\mu_8}{}{\mu_1}+{\mu_9}){}(x{}y+z{}w)\\
  &+(2{}{\mu_4}^2-2{}{\mu_8}){}y{}w
   +({\mu_5}{}{\mu_4}+2{}{\mu_9}){}(y{}z+w{}x)
   +(2{}{\mu_4}{}{\mu_1}-{\mu_5}){}(y{}x{}w+y{}z{}w)\\
  &+({\mu_4}{}{\mu_2}+{\mu_6}){}(y{}z^2+w{}x^2)
   +2{}{\mu_4}{}(y^2{}w+y{}w^2)
   +{\mu_5}{}(y^2{}z+x{}w^2)\\
  &+2{}{\mu_1}{}(x{}y{}w^2+y^2{}z{}w)
   +{\mu_2}{}(y^2{}z^2+x^2{}w^2)
   +(2{}{\mu_9}{}{\mu_1}+2{}{\mu_8}{}{\mu_2}+2{}{\mu_4}{}{\mu_6}+{\mu_5}^2){}x{}z\\
  &+({\mu_5}{}{\mu_1}+2{}{\mu_6}){}(x{}z{}w+x{}y{}z)
   +({\mu_6}{}{\mu_1}+{\mu_5}{}{\mu_2}+{\mu_4}{}{\mu_3}){}(x{}z^2+x^2{}z)\\
  &+(2{}{\mu_1}^2-2{}{\mu_2}){}x{}y{}z{}w
   +(2{}{\mu_3}{}{\mu_1}+{\mu_2}^2+2{}{\mu_4}){}x^2{}z^2
   +({\mu_2}{}{\mu_1}+3{}{\mu_3}){}(x{}y{}z^2+x^2{}z{}w)\\
  &+{\mu_1}{}(x^2{}z^3+x^3{}z^2)
   +({\mu_8}^2+2{}{\mu_{12}}{}{\mu_4}).
  \end{align*}
%%%%%%%%%%%%%%%%
\begin{remark}\label{inner_prod}
We denote by \(\mathscr{C}^{\circ}\) the {\it regular polygon} 
obtained by cutting off  \(\mathscr{C}\) by a set of \(g\) paths passing through \(\infty\)
which represents a symplectic base of \(H_1(\mathscr{C},\mathbb{Z})\).  
Let \(\omega\) and  \(\eta\) are any two 1-forms of the second kind with no poles elsewhere \(\infty\), 
and we regard them as elements in \(H^1(\mathscr{C},\mathbb{C})\) via (\ref{h1}). 
As usual, the space \(H^1(\mathscr{C},\mathbb{C})\) is equipped with the inner product defined by
\begin{equation*}
\omega\star\eta=\int_{\partial \mathscr{C}^{\circ}}\left(\int_{\infty}^{\mathrm{P}}\omega(\mathrm{P})\right)\eta(\mathrm{P})
=\sum_{\mathscr{C}^{\circ}}\Res\left(\int_0^{t}\omega(t)\right)\eta(t). 
\end{equation*}
Then the set of \(\{\omega_{w_j}\}\) and \(\{\eta_{-w_j}\}\) gives 
a symplectic base of the space \(H^1(\mathscr{C},\mathbb{C})\) (see \cite{nakayashiki_2010}). 
Indeed, if \(\omega\), \(\eta\in\{\omega_{w_j},\,\eta_{-w_j}\,|\,j=1,\,\cdots\,g\}\), we have
\begin{equation*}
\omega\star\eta=
\Res\limits_{t=0}\left(\int_{\mathrm{formal}}^{t}\omega(t)\right)\eta(t)
\end{equation*}
and symplectic property follows from the expansion we obtained before. 
\end{remark}
\begin{example}
Here, we shall show the integrals for the case \((e,q)=(3,4)\):
\begin{equation*}
\begin{aligned}
\left(\int_{\mathrm{formal}}^{t}\!\!\!\!\omega_5(t)\right)\eta_{-5}(t)&=(t^{-1}-\tfrac{8}{35}{\mu_2}t+\cdots)dt, \ 
\left(\int_{\mathrm{formal}}^{t}\!\!\!\!\omega_5(t)\right)\eta_{-2}(t) =(\tfrac{2}{5}t^2-\tfrac{4}{7}{\mu_2}t^4+\cdots)dt,\\
\left(\int_{\mathrm{formal}}^{t}\!\!\!\!\omega_5(t)\right)\eta_{-1}(t)&=(\tfrac{1}{5}t^3-\tfrac{2}{7}{\mu_2}t^5+\cdots)dt,\\
\left(\int_{\mathrm{formal}}^{t}\!\!\!\!\omega_2(t)\right)\eta_{-5}(t)&=(\tfrac{5}{2}t^{-4}+\tfrac{7}{4}{\mu_2}t^{-2}-\tfrac{1}{6}{\mu_2}^2+\cdots)dt,\\
\left(\int_{\mathrm{formal}}^{t}\!\!\!\!\omega_2(t)\right)\eta_{-2}(t)&=(t^{-1}-\tfrac{1}{2}\mu_2t+\cdots)dt,\
\left(\int_{\mathrm{formal}}^{t}\!\!\!\!\omega_2(t)\right)\eta_{-1}(t) =(\tfrac{1}{2}-\tfrac{1}{4}{\mu_2}t^2+\cdots)dt,\\
\left(\int_{\mathrm{formal}}^{t}\!\!\!\!\omega_1(t)\right)\eta_{-5}(t)&=(5t^{-5}+\tfrac{13}{3}\mu_2t^{-3}-\tfrac{3}{4}\mu_3t^{-2}-(-\tfrac{1}{2}\mu_3\mu_2+\tfrac{5}{3}\mu_5)+\cdots)dt,\\
\left(\int_{\mathrm{formal}}^{t}\!\!\!\!\omega_1(t)\right)\eta_{-2}(t)&=(2t^{-2}-\tfrac{2}{3}\mu_2+\cdots)dt,\ 
\left(\int_{\mathrm{formal}}^{t}\!\!\!\!\omega_1(t)\right)\eta_{-1}(t)=(t^{-1}-\tfrac{1}{3}{\mu_2}t+\cdots)dt.
\end{aligned}
\end{equation*}
\end{example}
\vskip 25pt 
\section{The standard theta cycles}\label{theta_cycles}%%%%%%%%%%%%%%%%%%%%%%%%%%%%%%%%%%%%%%%%%%%%%%%%%%
%%%%%%%%%%%%%%%%%%%%%%%%%%%%%%%%%%%%%%%%%%%%%%%%%%%%%%%%%%%%%%%%%%%%%%%%%%%%%%%%%%%%%%%%%%%%%%%%%%%%%%%%%
%\noindent
We fix here some notations including {\it standard theta cocycles}. 
Firstly we define the discriminant of \(\mathscr{C}\). 
\begin{definition}\label{discriminant}
Assuming all the \(\mu_j\)s are indeterminates, we define
  \begin{equation*}
  \begin{aligned}
  R_1&=\rslt_x
  \Big(\rslt_y\big(f(x,y), \tfrac{\partial}{\partial x}f(x,y)\big), 
       \rslt_y\big(f(x,y), \tfrac{\partial}{\partial y}f(x,y)\big)
  \Big), \\
  R_2&=\rslt_y
  \Big(\rslt_x\big(f(x,y), \tfrac{\partial}{\partial x}f(x,y)\big), 
       \rslt_x\big(f(x,y), \tfrac{\partial}{\partial y}f(x,y)\big)
  \Big),\\
  R&=\gcd(R_1,R_2)\ \ \ \ \mbox{in \(\mathbb{Z}[\bvec{\mu}]\)}. 
  \end{aligned}
  \end{equation*}
Here  \(\rslt_z\)  stands for taking the resultant of Sylvester. 
Then \(R\) is undoubted a square element in the ring \(\mathbb{Z}[\bvec{\mu}]\).
However, the author does not have any proof of this. 
We can check this by computer for the cases
\((d,q)=(2,3)\), \((2,5)\), \((3,4)\). 
No proof for this is not a defect for our context. 
We denote by \(D\) one of the square root of \(R\). 
We take the signature of this square root with adding (\ref{c}) 
as the equality \ref{inf_det} exactly holds. 
After all these operation, in order to get the correct {\it discriminant} \(D\), 
we substitute the own value of \(\mu_j\) to \(D\). 
The reason for assuming \(\{\mu_j\}\) be indeterminates is because 
taking greatest common divisor and substitution for \(\{\mu_j\}\) some values are not commutative. 
\end{definition}

If all \(\mu_j\)s belong to \(\mathbb{C}\) and the discriminant \(D\) of \(\mathscr{C}\) is not \(0\), then 
\begin{equation*}
\varLambda=\Bigg\{\oint(\omega_{w_1},\cdots,\omega_{w_g})\ \ \mbox{for any closed paths on \(\mathscr{C}\)}\,\Bigg\}
\subset\mathbb{C}^g
\end{equation*}
is a lattice of the space \(\mathbb{C}^g\). 
Then for each \(k\geqq 1\), we define {\it Abel-Jacobi map} by
\begin{equation*}
\begin{aligned}
\iota : \mathrm{Sym}^k\mathscr{C} &\longrightarrow J,\\
(\P_1,\cdots,\P_k)&\mapsto \sum_{j=1}^k\int_{\infty}^{\P_j}(\omega_{w_1},\cdots,\omega_{w_g})\ \bmod{\varLambda}.
\end{aligned}
\end{equation*}
Its image is denoted by \(W\br{k}\). 
We assume \(W\br{0}\) is a one point set consists of the origin of \(J\). 
Moreover, we define
\begin{equation}\label{theta_cycle}
\Theta\br{k}=W\br{k}\cup [-1]W\br{k}, \ \ \ (k\geqq 0),
\end{equation}
where  \([-1]\) is an involution which transform \(u_{w_1},\cdots,u_{w_g}\) to \(-u_{w_1},\cdots,-u_{w_g}\). 
We know that \(W\br{k}=\Theta\br{k}\) for \(k\geq g-1\), and 
\(W\br{k}=\Theta\br{k}=J\) for \(k\geqq g\). 
We call \(\Theta\br{k}\) (\(0\leqq k\leqq g-1\)) the {\it standard theta cocycles} and call \(\Theta\br{g-1}\) 
the {\it standard theta divisor} of \(J\).
\vskip 25pt
\section{Definition of the sigma function}\label{def_sigma}%%%%%%%%%%%%%%%%%%%%%%%%%%%%%%%%%%%%%%%%%%%%%%
%%%%%%%%%%%%%%%%%%%%%%%%%%%%%%%%%%%%%%%%%%%%%%%%%%%%%%%%%%%%%%%%%%%%%%%%%%%%%%%%%%%%%%%%%%%%%%%%%%%%%%%%%
%\noindent
In this section, we define the sigma function precisely. 
For proofs, the reader is referred to \cite{lect_chuo}. 
\par
We have already defined the 2-forms \(\{\eta_{w_j}\}\) of second kind. 
Let introduce four \(g\times g\) matrices
\begin{equation*}
  \begin{aligned}
\omega'&=\bigg[\int_{\alpha_j}\omega_{w_{g-i+1}}\bigg]\ , \ \ 
\omega''=\bigg[\int_{\beta_j}\omega_{w_{g-i+1}}\bigg]\ , \\
\eta'&=\bigg[\int_{\alpha_j}\eta_{w_{g-i+1}}\bigg]\ , \ \ 
\eta''=\bigg[\int_{\beta_j}\eta_{w_{g-i+1}}\bigg].
  \end{aligned}
\end{equation*}
For an arbitrary \(u\in\mathbb{C}^g\), we define \(u'\) and \(u''\in\mathbb{R}^g\) by
\begin{equation}\label{prime_primes}
u=\omega'u'+\omega''u''.
\end{equation}
We write as \({\omega'}^{-1}\tp{(\omega_{w_g},\cdots,\omega_{w_1})}=\tp{(\hat{\omega}_1,\cdots,\hat{\omega}_g)\)}, 
\({\omega'}^{-1}\omega''=\big[T_{ij}\big]\), and define
\begin{equation*}
\begin{aligned}
\delta_j&=-\tfrac12T_{jj}-\int_{\infty}^{\I_j}\hat{\omega}_j
+\sum_{i=1}^g\int_{\alpha_i}\bigg(\int_{\infty}^{\P}\hat{\omega}_j\bigg)\,
\hat{\omega}_i(\P),\\
\delta&={\omega'}\,\tp{(\delta_1,\cdots,\delta_g)}.
\end{aligned}
\end{equation*}
Here, the integral are given on the regular polygon \(\mathscr{C}_{\circ}\) 
(defined in \ref{inner_prod}) whose boundary 
is \(\alpha_1\circ{\alpha_1}^{-1}\circ\beta_1\circ{\beta_1}^{-1}\circ\cdots\circ\alpha_g\circ{\alpha_g}^{-1}\circ\beta_g\circ{\beta_g}^{-1}\) 
after regarding  \(\mathscr{C}\) as a compact Riemann surface, 
and \(\I_j\) is the initial point of  \(\alpha_j\) corresponding \(\infty\). 
The vector \(\delta\) is the {\it Riemann constant} determined by 
\(\mathscr{C}\) with our base point \(\infty\). 
Since the divisor of \(\omega_{w_g}\) is \((2g-2)\infty\), we have
\begin{equation*}
\delta\in\tfrac12\Lambda. 
\end{equation*}
Following the definition (\ref{prime_primes}), 
we define \(\delta'\) and  \(\delta''\in\frac12\mathbb{Z}^g\) by \(\delta=\omega'\delta'+\omega''\delta''\). 
For any \(u\), \(v\in\mathbb{C}^g\) and any \(\ell\in\Lambda\), we define
\begin{equation*}
\begin{aligned}
L(u,v)&=\tp{u}(v'\eta'+v''\eta''), \\
\chi(\ell)&=\exp\big(\,2\pi\i(\ell'\,\tp{\delta'}+\ell''\,\tp{\delta''}+\tfrac12\ell'\,\tp{\ell''})\,\big). 
\end{aligned}
\end{equation*}
Then a theorem of Frobenius shows that the entire functions \(\sigma(u)\) on the space \(\mathbb{C}^g\) 
satisfying the {\it translational relation}
\begin{equation}\label{translational}
\sigma(u+\ell)=\chi(\ell)\,\sigma(u)\,\exp\,L(u+\tfrac12\ell,\ell), \ \ \ (u\in\mathbb{C}^g, \ \ \ell\in\Lambda)
\end{equation}
form a \(1\)-dimensional vector space. 
Such functions has zeroes of order \(1\) exactly along the pull-back image of  \(\Theta\br{g-1}\) via 
the canonical map \(\kappa\) : \(\mathbb{C}^g\rightarrow\mathbb{C}^g/\Lambda=J\). 
\par
We shall describe this in other words. 
There exists a vector of functions
\begin{equation*}
  \zeta(u)=[\zeta_{w_1}(u), \cdots, \zeta_{w_g}(u)]
\end{equation*}
having properties 
  \begin{equation*}
  \zeta(u+\ell)=\zeta(u)+\eta'\ell'+\eta''\ell'' \ \ \ (\,\ell\in\Lambda\,)
  \end{equation*}
and that there exists unique entire function \(\sigma(u)\) on  \(\mathbb{C}^g\), 
up to multiplication of a constant, 
which has zeroes of order \(1\) exactly along \(\kappa^{-1}(\Theta\br{g-1})\) and satisfies
  \begin{equation*}
%  \begin{aligned}
  d\log\sigma(u)=\zeta_{w_1}(u)du_{w_1}+\zeta_{w_2}(u)du_{w_2}+\cdots+\zeta_{w_g}(u)du_{w_g}\ (\,=\zeta(u)du\,).
%  \sigma(u)&=S(u)+O(\bvec{\mu})
%  \end{aligned}
  \end{equation*}
\vskip 10pt
\noindent
\textbf{Construction of the sigma function via a theta series}\\
We define a function \(\sigma(u)\) by
  \begin{equation*}
  \sigma(u)=c\,\exp\big(-\tfrac12\tp{u}{\eta'}{\omega'}^{-1}u\big)
  \,\vartheta\bigg[\begin{matrix}\delta''\\\delta'\end{matrix}\bigg]
  ({\omega'}^{-1}u;{\omega'}^{-1}\omega''),
  \end{equation*}
and call it the {\it sigma function} for  \(\mathscr{C}\). 
Here, \(c\neq0\) is a constant independent of \(u\) is not important in this paper. 
We define \(c\) as a constant by which the expression \ref{inf_det} holds.  
The correct value of \(c\) might given by
  \begin{equation}\label{c}
  c=\frac{1}{D^{1/8}}\bigg(\frac{|\omega'|}{(2\pi)^g}\bigg)^{1/2},
  \end{equation}
where  \(\pi=3.1415\cdots\) is the circle ratio, \(|\omega'|\) is the determinant of the matrix \(\omega'\), 
and \(D\) is the discriminant defined by \ref{discriminant}. 
%%%%%%%%%%%%%%%%%%%
However, the choice of the signature of the right hand side in (\ref{c}) 
is not clear for the author. 
\par 
If we choose another symplectice base of \(H_1(\mathscr{C},\mathbb{Z})\) 
the period integral \(\omega'\), \(\omega''\), \(\eta'\), \(\eta''\) \ and 
the Riemann constants \(\delta'\), \(\delta''\) are transformed.  
But \(\sigma(u)\) itself is invariant under this change, which property is proved 
by using modular transformation property of the theta series
({see \cite{baker_1897}, p.536}). 
Of course, the translational formula for the theta series with respect to the change  \(u\) by \(u+\ell\)
implies (\ref{translational}). 
\vskip 25pt
\section{The determinantal expression of the sigma function}\label{the_det_expression}%%%%%%%%%%%%%%%%%%%
%%%%%%%%%%%%%%%%%%%%%%%%%%%%%%%%%%%%%%%%%%%%%%%%%%%%%%%%%%%%%%%%%%%%%%%%%%%%%%%%%%%%%%%%%%%%%%%%%%%%%%%%%
%\noindent
Now we proceed to explain the determinantal expression of the sigma function. 
We use the arithmetic parameter \(t\) in place of the local parameter \(z\) used in \cite{nakayashiki_2010}. 
Moreover we denote the set of infinitely many number of variables \(\{t_j\}\) used in \cite{nakayashiki_2010} 
by \(\{U_j\}\), which is used in the theory of tau functions in integrable systems. 
We write as\footnote{Please do not confuse with \(c_j\) in \ref{abc}.}
\begin{equation}\label{c_j}
\log \left(\sqrt{\frac{1}{t^{2g-2}}\frac{\omega_{w_g}(t)}{dt}}\,\right)
=\sum_j \frac{c_j}jt^j. 
\end{equation}
\begin{equation}
\omega_{w_g}(t)\in{t^{w_g}}\,\mathbb{Z}[\bvec{\mu}][[t]]dt
\end{equation}
So that, 
\begin{equation*}
c_j\in\tfrac12\mathbb{Z}[\bvec{\mu}]. 
\end{equation*}
It is important for our main theorem that, for which \(j\), we have \(c_j\in\mathbb{Z}[\bvec{\mu}]\).  
\par
We define \(q_{ij}\in\mathbb{Z}[\bvec{\mu}]\) by
\begin{equation*}
\bvec{\xi}(t_1,t_2)-\frac{dt_1dt_2}{(t_1-t_2)^2}
=\sum_{i,j}q_{ij}{t_1}^{i-1}{t_2}^{j-1}dt_1dt_2. 
\end{equation*}
Using \(\{q_{ij}\}\), we set
\begin{equation*}
q(u)=\sum_{j=1}^{g}c_{w_j}U_{w_j}+\frac12\sum_{i=1}^g\sum_{j=1}^gq_{ij}U_{w_i}U_{w_j}. 
\end{equation*}
Since \(\frac12{U_{w_i}}^2=\frac{1}{2!}{U_{w_i}}^2\) and  \(q_{ij}=q_{ji}\in\mathbb{Z}[\bvec{\mu}]\), 
the later sum is Hurwitz integral with respect to \(\{U_j\}\). 
For a monomial \(\varphi(t)\) of  \(x(t)\) and \(y(t)\), we write its coefficient as 
\begin{equation*}
\varphi(t)=\sum_{j}(\varphi)_{(j)}\,t^j. 
\end{equation*}
We display all the monomials of \(x(t)\) and \(y(t)\) in increasing order of the order of the pole at \(t=0\). 
We define a matrix \(\varGamma\) with numbering by integers in lows and doing by non-negative integers by 
\begin{equation}\label{varGamma}
\varGamma=\left[\ 
\begin{matrix}
\ddots & \vdots                  & \vdots                  & \vdots              \\
\cdots & \varphi_{v_3}(t)_{(-6)} & \varphi_{v_2}(t)_{(-6)} & \varphi_{v_1}(t)_{(-6)} & \varphi_0(t)_{(-6)} \\
\cdots & \varphi_{v_3}(t)_{(-5)} & \varphi_{v_2}(t)_{(-5)} & \varphi_{v_1}(t)_{(-5)} & \varphi_0(t)_{(-5)} \\
\cdots & \varphi_{v_3}(t)_{(-4)} & \varphi_{v_2}(t)_{(-4)} & \varphi_{v_1}(t)_{(-4)} & \varphi_0(t)_{(-4)} \\
\cdots & \varphi_{v_3}(t)_{(-3)} & \varphi_{v_2}(t)_{(-3)} & \varphi_{v_1}(t)_{(-3)} & \varphi_0(t)_{(-3)} \\
\hline
\cdots & \varphi_{v_3}(t)_{(-2)} & \varphi_{v_2}(t)_{(-2)} & \varphi_{v_1}(t)_{(-2)} & \varphi_0(t)_{(-2)} \\
\cdots & \varphi_{v_3}(t)_{(-1)} & \varphi_{v_2}(t)_{(-1)} & \varphi_{v_1}(t)_{(-1)} & \varphi_0(t)_{(-1)} \\
\cdots & \varphi_{v_3}(t)_{( 0)} & \varphi_{v_2}(t)_{( 0)} & \varphi_{v_1}(t)_{( 0)} & \varphi_0(t)_{( 0)} \\
\cdots & \varphi_{v_3}(t)_{( 1)} & \varphi_{v_2}(t)_{( 1)} & \varphi_{v_1}(t)_{( 1)} & \varphi_0(t)_{( 1)} \\
\cdots & \varphi_{v_3}(t)_{( 2)} & \varphi_{v_2}(t)_{( 2)} & \varphi_{v_1}(t)_{( 2)} & \varphi_0(t)_{( 2)} \\
\ddots & \vdots                  & \vdots                  & \vdots                  & \vdots              
\end{matrix}
\ \right].
\end{equation}
Let \(T\) be a new indeterminate, set
\begin{equation*}
\begin{aligned}
\bvec{T}&=U_1T+U_2T^2+U_3T^3+U_4T^4+U_5T^5+\cdots.\\
(&\mbox{For the (\(3,4\))-curve, this will be set }\  =U_1T+U_2T^2+U_5T^5\ \ \mbox{later}.)
\end{aligned}
\end{equation*}
We define  \(p_j\in\mathbb{Q}[\bvec{\mu}]\llg{U}\rrg\)  by
  \begin{equation*}
  \begin{aligned}
  \bvec{p}&=1+\frac1{1!}\bvec{T}+\frac1{2!}\bvec{T}^2+\frac1{3!}\bvec{T}^3+\frac1{4!}\bvec{T}^4+\frac1{5!}\bvec{T}^5+\cdots\\
         &=p_0+p_1T+p_2T^2+p_3T^3+p_4T^4+p_5T^5+\cdots.
  \end{aligned}
  \end{equation*}
Then, we see 
\begin{equation*}
p_j\in\mathbb{Z}[\bvec{\mu}]\llg{U}\rrg\cap\mathbb{Q}[\bvec{\mu}][U].
\end{equation*}
Using those matrices, we define 
\begin{equation}\label{S}
S(U)=S(B^{-1}u)
 =\left[\ 
 \begin{matrix}
 \ddots & \vdots\ & \vdots & \vdots & \vdots & \vdots & \vdots\!\!\!\!&\vertbar&\!\! \vdots & \vdots & \vdots & \vdots & \vdots  & \ddots\\
 \cdots &   1   \ &  p_1   &  p_2   &  p_3   &  p_4   &  p_5  \!\!\!\!&\vertbar&\!\!  p_6   &  p_7   &  p_8   &  p_9   &  p_{10} & \cdots\\
 \cdots &   0   \ &    1   &  p_1   &  p_2   &  p_3   &  p_4  \!\!\!\!&\vertbar&\!\!  p_5   &  p_6   &  p_7   &  p_8   &  p_9    & \cdots\\
 \cdots &   0   \ &    0   &    1   &  p_1   &  p_2   &  p_3  \!\!\!\!&\vertbar&\!\!  p_4   &  p_5   &  p_6   &  p_7   &  p_8    & \cdots\\
 \cdots &   0   \ &    0   &    0   &    1   &  p_1   &  p_2  \!\!\!\!&\vertbar&\!\!  p_3   &  p_4   &  p_5   &  p_6   &  p_7    & \cdots\\
 \cdots &   0   \ &    0   &    0   &    0   &    1   &  p_1  \!\!\!\!&\vertbar&\!\!  p_2   &  p_3   &  p_4   &  p_5   &  p_6    & \cdots\\
 \cdots &   0   \ &    0   &    0   &    0   &    0   &    1  \!\!\!\!&\vertbar&\!\!  p_1   &  p_2   &  p_3   &  p_4   &  p_5    & \cdots
 \end{matrix}\ 
 \right]. 
 \end{equation}
We explain the main result of \cite{nakayashiki_2010}. % ($B$[$+$NJ?LL;01:6J@~$N>l9g$bF1MM$G$"$k(B). 
Let 
\begin{equation*}
B=\left[\ 
\begin{matrix}
\omega_{w_g}(t)_{(0)} & \omega_{w_g}(t)_{(1)} & \omega_{w_g}(t)_{(2)} & \cdots \\
\vdots                & \vdots                & \vdots                & \ddots \\
\omega_{w_2}(t)_{(0)} & \omega_{w_2}(t)_{(1)} & \omega_{w_2}(t)_{(2)} & \cdots \\
\omega_{w_1}(t)_{(0)} & \omega_{w_1}(t)_{(1)} & \omega_{w_1}(t)_{(2)} & \cdots \\
\end{matrix}
\ \right].
\end{equation*}
We need the following 
\begin{theorem}\label{inf_det} \ {\rm (Nakayashiki \cite{nakayashiki_2010})} % ************************************
We have
\begin{equation*}
\sigma(BU)
=\det\big(S(U){\varGamma}\big)\cdot\exp\,{q(U)}. 
\end{equation*}
Here, the product of matrices is operated as the lines in {\rm(\ref{varGamma})} and in {\rm(\ref{S})} engage. 
We define {\rm(}the sign of\,{\rm)} the overall multiplicative constant \(c\) of \(\sigma(u)\) by this equality. 
\end{theorem}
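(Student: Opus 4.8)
The plan is to obtain the identity as a special case of the algebro-geometric description of KP tau functions, along the lines of Nakayashiki \cite{nakayashiki_2010}. First I would attach to the pointed curve $(\mathscr{C},\infty)$, equipped with the arithmetic local parameter $t$, the point $W$ of the Sato--Segal--Wilson Grassmannian spanned by the $t$-expansions at $\infty$ of the elements of the affine coordinate ring $H^0(\mathscr{C}\setminus\{\infty\},\mathscr{O})$; listed by pole order, these expansions are exactly the columns $\varphi_{v_j}(t)$ filling the matrix $\varGamma$ of (\ref{varGamma}). Sato's theory then identifies the KP tau function of $W$ with $\det\big(S(U)\varGamma\big)$: the matrix $S(U)$ of (\ref{S}), whose entries are the elementary Schur polynomials $p_j$ of the times $U=(U_1,U_2,\dots)$, is precisely the matrix of multiplication by $\bvec{p}=\exp(\bvec{T})$ on $\mathbb{C}((T))$ in the monomial basis, so that $\det\big(S(U)\varGamma\big)$ is the Plücker-coordinate expansion of the flowed frame $\exp(\bvec{T})\cdot W$. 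This supplies the tau side of the statement.

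Second, I would invoke the classical comparison between this tau function and Riemann's theta function (Krichever; Segal--Wilson; Mulase; Shiota): up to a nonzero constant, $\det\big(S(U)\varGamma\big)=e^{Q(U)}\,\theta\big(\textstyle\sum_j U_j\bvec{A}_j+\bvec{e}\,;\,\Pi\big)$, where the $\bvec{A}_j$ are the coefficient vectors in the $t$-expansions of the normalized holomorphic differentials, $\bvec{e}$ is a fixed translate of the Riemann constant, $\Pi={\omega'}^{-1}\omega''$ is the normalized period matrix, and $Q(U)$ is a linear-plus-quadratic form in the $U_j$ whose coefficients are extracted from the expansions at $\infty$ of the second-kind differentials and of the symmetric bidifferential on $\mathscr{C}$. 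The quadratic coefficients are exactly the $q_{ij}$ read off from $\bvec{\xi}(t_1,t_2)-dt_1dt_2/(t_1-t_2)^2$, while the linear coefficients are the $c_{w_j}$ normalized in (\ref{c_j}), the factor $\sqrt{t^{-(2g-2)}\omega_{w_g}(t)/dt}$ being the local incarnation of the prime-form correction forced by the divisor $(2g-2)\infty$ of $\omega_{w_g}$. Hence $Q(U)=q(U)$.

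Third, I would pass from the theta side to the sigma side. Writing $u=BU$ with $B$ the matrix of $t$-expansion coefficients of $(\omega_{w_g},\dots,\omega_{w_1})$ turns $\sum_j U_j\bvec{A}_j$ into ${\omega'}^{-1}u$ modulo the period lattice, and the translate $\bvec{e}$ matches the theta characteristic built from $\delta'$, $\delta''$ in the definition of $\sigma$ in Section \ref{def_sigma}; the Riemann vanishing theorem then shows that both $\det\big(S(U)\varGamma\big)$ and $\sigma(BU)$ have simple zeros exactly along the pullback of $\Theta\br{g-1}$, so they agree up to a zero-free holomorphic function, which the translational law (\ref{translational}) pins down to $\exp q(U)$ times a constant. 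Since the overall constant $c$ of $\sigma$---indeed its value (\ref{c}) and its sign---has not been fixed elsewhere, one simply \emph{defines} it so that equality holds in the displayed formula; that clause needs no further verification.

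The main obstacle is the bookkeeping in the second and third steps: one must check that the linear-quadratic form produced by the Sato/Krichever machinery, once rewritten through the arithmetic parameter $t$ and the change of variables $B$, is literally the $q(U)$ assembled from $\{c_{w_j}\}$ and $\{q_{ij}\}$---in particular that the prime-form normalization implicit in the Segal--Wilson picture is exactly the square root recorded in (\ref{c_j}), and that the shift of the theta argument by the Riemann constant is precisely cancelled by the characteristic $\delta$, so that the zero divisor of $\sigma$ genuinely passes through the origin rather than through some unspecified translate. Handling the characteristics and the base point $\infty$ with this precision is the delicate part; everything else is the standard dictionary between points of the Grassmannian, tau functions, and theta functions.
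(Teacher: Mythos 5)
Your outline follows exactly the route this paper relies on: Theorem \ref{inf_det} is quoted from Nakayashiki \cite{nakayashiki_2010} without an independent proof here, and Nakayashiki's argument is precisely the Sato--Grassmannian/tau-function dictionary you describe (the frame spanned by the $t$-expansions of the affine ring giving $\varGamma$, the Schur-polynomial matrix $S(U)$ producing the tau function $\det\big(S(U)\varGamma\big)$, the comparison with the theta function, and the exponential correction assembled from the $c_{w_j}$ and the $q_{ij}$, with the constant $c$ fixed by the equality itself). So your proposal is correct in approach and matches the cited source; the normalization and characteristic bookkeeping you flag as the delicate part is exactly what \cite{nakayashiki_2010} carries out, and is not reproved in this paper.
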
% ****************************************************************************************************
%%%%%%
\begin{example}
In the case \((e,q)=(3,4)\), if all the \(\mu_j=0\), then \(\sigma(u)\) is given by
\begin{equation*}
\sigma(u_5,u_3,u_1)\big|_{\vec{\mu}=0}=\left|\ 
\begin{matrix}
p_3 & p_4 & p_5 \\
  1 & p_1 & p_2 \\
    &   1 & p_1 \\
\end{matrix}
\ \right|=u_5-u_1{u_2}^2+\tfrac{1}{20}{u_1}^5. 
\end{equation*}
\end{example}
%%%%%%%%%%
We explain the weight of \(\sigma(u)\). 
\begin{proposition}
The expansion of  \(\sigma(u)\) around the origin is of homogeneous weight. Its weight is 
\begin{equation*}
\mathrm{wt}\,\big(\sigma(u)\big)=\tfrac1{24}(e^2-1)(q^2-1). 
\end{equation*}
\end{proposition}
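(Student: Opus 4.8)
The plan is to read off the weight directly from the determinantal expression in Theorem~\ref{inf_det}, namely $\sigma(BU)=\det\bigl(S(U)\varGamma\bigr)\cdot\exp q(U)$, after first recording the weights of all the ingredients. Since $\wt(t)=1$ and $\omega_{w_g}(t)\in t^{w_g}\mathbb{Z}[\bvec{\mu}][[t]]\,dt$ with $\wt(\omega_{w_g})=w_g=2g-1$ (setting $\wt(dt)=0$ here, or rather absorbing it consistently), the quantity inside the logarithm in (\ref{c_j}) has weight $0$, so each $c_j$ has weight $-j$ and $c_jU_{w_j}/j\cdot t^j$-type bookkeeping forces $\wt(U_j)=-j$ if we demand $q(U)$ homogeneous of weight $0$; likewise $\wt(q_{ij})=-(i+j)$ from $\wt(\bvec{\xi})=2$ minus $\wt(dt_1dt_2)$, so the quadratic part $q_{ij}U_{w_i}U_{w_j}$ is also weight $0$. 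Hence $\exp q(U)$ is weight $0$, and $\sigma(BU)$ is homogeneous exactly when $\det\bigl(S(U)\varGamma\bigr)$ is. The entries of $S(U)$ are the $p_j$, which are polynomials in the $U_k$'s (weight $-k$) built from $\bvec T=\sum U_kT^k$; assigning $\wt(T)=1$ makes $\bvec T$ weight $0$ and each $p_j$ weight $-j$ (it is the coefficient of $T^j$ in $\bvec p$, which is weight $0$). The entries of $\varGamma$ are the coefficients $\varphi_{v_k}(t)_{(m)}$; since $\wt(x)=-e$, $\wt(y)=-q$, a monomial $\varphi_{v_k}$ with pole order $v_k$ at $t=0$ (so leading term $t^{-v_k}$) has $\wt(\varphi_{v_k})=v_k$, and its degree-$m$ coefficient has weight $v_k+m$.

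Next I would pin down the normalization of $\sigma$ via its known leading term. From the Example with all $\mu_j=0$ one has $\sigma\big|_{\bvec\mu=0}=u_{w_g}+(\text{higher order in the }u\text{'s})$, and more generally the Schur-polynomial leading term of $\sigma$ is the Schur function $s_\lambda(u)$ attached to the partition $\lambda$ of the Weierstrass gap sequence; its weight (with $\wt(u_{w_j})=-w_j$) equals $-\sum_{j}(w_j-j)=-\bigl(\sum_{j=1}^g w_j-\tfrac{g(g+1)}{2}\bigr)$. Thus $\wt(\sigma(u))=-\sum_{j=1}^g w_j+\tfrac{g(g+1)}{2}$, and the whole problem reduces to the purely combinatorial identity
\begin{equation*}
\sum_{j=1}^g w_j-\frac{g(g+1)}{2}=\frac{(e^2-1)(q^2-1)}{24},
\end{equation*}
equivalently $\sum_{j=1}^g w_j=\tfrac{(e^2-1)(q^2-1)}{24}+\tfrac{g(g+1)}{2}$ with $g=\tfrac{(e-1)(q-1)}{2}$, where the $w_j$ are the gaps of the numerical semigroup $\langle e,q\rangle$.

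To finish I would compute $\sum_{j}w_j$, the sum of the gaps (non-representable positive integers) of $\langle e,q\rangle$. This is classical: the generating function gives $\sum_{\text{gaps }n}n=\tfrac{1}{12}(e-1)(q-1)(2eq-e-q-1)$, which can be derived from the symmetry $n\mapsto (2g-1)-n$ of the gap set (the semigroup is symmetric since it is generated by two elements) together with the count $g=\tfrac{(e-1)(q-1)}{2}$ of gaps and the value $2g-1=eq-e-q$ of the Frobenius number: symmetry gives $\sum w_j=g\cdot\tfrac{2g-1}{2}\cdot$ (no — rather it gives $2\sum w_j=g(2g-1)$ only if the gaps were symmetric about $(2g-1)/2$ as a multiset, which they are), hence $\sum w_j=\tfrac{g(2g-1)}{2}=\tfrac{(e-1)(q-1)(eq-e-q)}{4}$. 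Then a direct algebraic check confirms
\begin{equation*}
\frac{(e-1)(q-1)(eq-e-q)}{4}-\frac{g(g+1)}{2}=\frac{(e^2-1)(q^2-1)}{24},
\end{equation*}
since the left side is $\tfrac{g(2g-1)}{2}-\tfrac{g(g+1)}{2}=\tfrac{g(g-2)}{2}\cdot$—here I should just expand both sides in $e,q$ and verify the polynomial identity, which is routine.

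The main obstacle is not any single hard step but making the weight bookkeeping airtight: one must check that every factor appearing in $\det(S(U)\varGamma)$ combines to a single homogeneous weight independent of which terms of the (infinite) determinant one picks, and that this weight is computed correctly from the Schur-function leading term rather than, say, from some lower-order cancellation. Concretely, the delicate point is justifying that the pole-order indexing of the rows of $\varGamma$ and the shift structure of $S(U)$ conspire so that the infinite product-of-matrices $\det(S(U)\varGamma)$ is well-defined and homogeneous of exactly the weight of its leading Schur term; granting Nakayashiki's theorem this is really a statement about the semigroup combinatorics, so once the weight assignments $\wt(U_j)=-j$, $\wt(T)=1$ are fixed, homogeneity is automatic and only the closed-form evaluation of $\sum w_j$ remains, which is the classical computation sketched above.
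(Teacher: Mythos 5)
Your overall strategy is the same as the paper's: granting Nakayashiki's determinantal formula, homogeneity comes from weight bookkeeping, and the weight itself is read off from the $\mu=0$ specialization, which is the Schur-type quotient $\det\big[(u^{(i)})^{w_j}\big]/\det\big[(u^{(i)})^{j-1}\big]$, so that everything reduces to evaluating the sum of the Weierstrass gaps. However, your execution of this reduction contains two concrete errors, and the "routine polynomial identity" you propose to verify at the end is in fact false. First, the denominator of the Schur quotient is the Vandermonde $\det\big[(u^{(i)})^{j-1}\big]$, of weight $\sum_{j=1}^{g}(j-1)=g(g-1)/2$, not $g(g+1)/2$; equivalently, the partition attached to the gap sequence has parts $w_j-(j-1)$, not $w_j-j$. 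The correct target identity is $\sum_{j}w_j-\tfrac{g(g-1)}{2}=\tfrac{(e^2-1)(q^2-1)}{24}$. For $(e,q)=(3,4)$ the gaps are $1,2,5$, and $8-3=5$ matches the leading term $C_5$ of the example, whereas your $8-6=2$ does not.

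Second, the gap set of $\langle e,q\rangle$ is \emph{not} symmetric under $n\mapsto(2g-1)-n$ as a multiset: symmetry of the semigroup says $n$ is a gap if and only if $(2g-1)-n$ is a \emph{non}-gap (for $(3,4)$ the images of the gaps $1,2,5$ are $4,3,0$, none of which is a gap). Hence $\sum_j w_j\neq g(2g-1)/2$ — which is not even an integer when $g$ is odd — and the correct value is the one you first quoted from the generating function, $\sum_j w_j=\tfrac{1}{12}(e-1)(q-1)(2eq-e-q-1)$, exactly what the paper extracts from $w(T)=\tfrac{1}{1-T}-\tfrac{1-T^{eq}}{(1-T^e)(1-T^q)}$, before you abandon it for the false symmetry shortcut. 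With your two substitutions the final identity fails numerically (at $(3,4)$ the left side is $3/2$, the right side $5$); with the corrections it closes, $\tfrac{(e-1)(q-1)(2eq-e-q-1)}{12}-\tfrac{g(g-1)}{2}=\tfrac{(e^2-1)(q^2-1)}{24}$, and your argument becomes the paper's proof. A minor further point: with the paper's conventions $\wt(\mu_j)=-j$, homogeneity of $q(U)$ forces $\wt(U_{w_j})=+w_j$ rather than $-w_j$; your sign choice makes $c_{w_j}U_{w_j}$ inhomogeneous and would give $\sigma$ a negative weight, contradicting the positive value being proved.
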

\begin{proof} 
(Due to \cite{bl_2004}, p.97) 
For our Weierstrass sequence (\ref{w_gaps}) 
\((w_1,w_2,\cdots)\) is the ascending set of positive integers 
that are not representable in the form \(ae+bq\) with non-negative integers \(a\) and \(b\). 
Set \(w(T)=\sum_iT^{w_i}\) with an indeterminate \(T\). 
We have
\begin{equation*}
w(T) =\frac{1}{1-T}-\frac{1-T^{eq}}{(1-T^e)(1-T^q)}.
\end{equation*}
Therefore the length of the Weierstrass sequence \(g=w(1)\), 
and the sum  \(G=\frac{d}{dT}w(1)\)  of the elements of this sequence are given by the formulae
\begin{equation*}
g=\frac{(e-1)(q-1)}2, \ \ G=\frac{eq(e-1)(q-1)}4-\frac{(e^2-1)(q^2-1)}{12}. 
\end{equation*}
Let us introduce new variables \(\{u^{(1)},\cdots,u^{(g)}\}\), with weight \(1\) for all, such that 
the relations \(u_j=\tfrac1{j}\displaystyle\sum_{i=1}^g{u^{(i)}}^j\) holds 
for \(j=w_1\), \(\cdots\), \(w_g\) (Newton's symmetric polynomial). 
Over any algebraically closed field, by \cite{macdonald_1995} p.29, \(\ell.-4\) and p.28, \(\ell\).13 for example, 
we see that for any value of \((u_{w_g},\cdots,u_{w_1})\) there exists a solution of the system of that relations, 
and we know that
\begin{equation*}
\sigma(u_{w_g},u_{w_{g-1}},\cdots,u_{w_1})\big|_{\vec{\mu}=0}
=\frac{\mathrm{det}\,\big[{u^{(i)}}^{w_j}\big]}{\mathrm{det}\,\big[{u^{(i)}}^{j-1}\big]}\ \ 
\ \ (i, j=1, \cdots, g). 
\end{equation*}
Hence, 
\begin{equation*}
\mathrm{wt}\,\big(\sigma(u_{w_g},\cdots,u_{w_1})\big)
=\sum_{k=1}^gw_k-\sum_{k=1}^g(k-1)
=G-\frac{(g-1)g}2
=\frac{(e^2-1)(q^2-1)}{24}.
\end{equation*}
\end{proof}
% Let us normalize the polynomial \(\sigma_0(u)\) by the condition ...

Using Theorem \ref{inf_det}, we prove that the expansion  \(\sigma(u)\) around the origin is of Hurwitz integral. 
We let denote by \(B_0\) the matrix obtained by removing the columns except for 
\(j=1\), \(\cdots\), \(g\) from \(B\). 
The matrix \(B_0\) is an upper triangular matrix belongs to \(\mathrm{SL}(g,\mathbb{Z}[\bvec{\mu}])\) 
with all its diagonal entries being all \(1\). 
We set all  \(U_i=0\) except for \(U_{w_j}\) (\(j=1\), \(\cdots\), \(g\)), and 
set, for the variables  \(\{u_{w_j}\}\)  of \(\sigma(u)\), 
\begin{equation}\label{Key}
[\ U_{w_g} \ \cdots \ U_{w_2} \ U_{w_1}\,]={B_0}^{-1}\,\tp{[\,u_{w_g} \ \cdots \ u_{w_2} \ u_{w_1}\,]}={B_0}^{-1}u. 
\end{equation}
Then  \(U_{w_j}\in\mathbb{Z}[\bvec{\mu}][u]\) and the following expression of the sigma function by Theorem \ref{inf_det}.
\begin{theorem}% **************************************************************************************************
The sigma function is expressed as
\begin{equation}\label{det_expression}
\begin{aligned}
\sigma(u)
=\det\big(S({B_0}^{-1}u){\varGamma}\big)\cdot\exp\,{q({B_0}^{-1}u)}.
\end{aligned}
\end{equation}
\end{theorem}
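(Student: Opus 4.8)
The plan is to obtain this as an immediate consequence of Nakayashiki's Theorem~\ref{inf_det} by performing the linear change of variables \(U\mapsto u\) implemented by the matrix \(B_0\). Recall that Theorem~\ref{inf_det} is the identity \(\sigma(BU)=\det\big(S(U)\varGamma\big)\cdot\exp q(U)\), to be read as an identity of formal power series in the countably many variables \(U_1,U_2,U_3,\dots\) over \(\mathbb{Q}[\bvec{\mu}]\). First I would specialise both sides along the locus described just before the statement: put \(U_i=0\) for every index \(i\) outside the Weierstrass gap sequence \(\{w_1,\dots,w_g\}\), and then substitute \(\tp{[\,U_{w_g}\ \cdots\ U_{w_1}\,]}=B_0^{-1}u\) as in (\ref{Key}). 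Since \(B_0\) is, as noted before the statement, an upper triangular matrix in \(\mathrm{SL}(g,\mathbb{Z}[\bvec{\mu}])\) with unit diagonal, its inverse again has entries in \(\mathbb{Z}[\bvec{\mu}]\); hence each \(U_{w_j}\) that we substitute is a genuine element of \(\mathbb{Z}[\bvec{\mu}][u]\), linear in the \(u_{w_k}\), so substituting these polynomials into formal power series over \(\mathbb{Q}[\bvec{\mu}]\) is harmless and lands us back among the formal power series in \(u_{w_1},\dots,u_{w_g}\) over \(\mathbb{Q}[\bvec{\mu}]\).

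The one point that requires care is that after this specialisation the left-hand side \(\sigma(BU)\) really collapses to \(\sigma(u)\), i.e. that \(BU=u\). Here I would argue as follows: when \(U\) is supported on the gap indices, only those columns of \(B\) that are coupled in the product \(BU\) with one of \(U_{w_1},\dots,U_{w_g}\) can contribute, and by construction these are precisely the columns retained in forming the \(g\times g\) block \(B_0\) (the columns carrying the leading coefficients \(\omega_{w_i}(t)_{(w_i-1)}=1\) of the rows of \(B\)); consequently \(BU=B_0\cdot\tp{[\,U_{w_g}\ \cdots\ U_{w_1}\,]}=B_0B_0^{-1}u=u\). This is the step I expect to be the main obstacle, not because it is deep but because it demands careful bookkeeping with the semi-infinite matrices \(B\), \(S(U)\) and \(\varGamma\): one must make the index conventions explicit enough to see both that the pairing of the columns of \(B\) with the variables \(U_j\) lines up with the block \(B_0\), and that the semi-infinite matrix product \(S(U)\varGamma\) together with its determinant stays well defined — and the identity of Theorem~\ref{inf_det} persists — after the substitution \(U_i=0\) for \(i\notin\{w_j\}\); the latter is already guaranteed by the local-finiteness built into Theorem~\ref{inf_det}, so no new convergence input is needed.

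Finally, granting \(BU=u\), the conclusion reads off directly: Theorem~\ref{inf_det} evaluated along (\ref{Key}) gives \(\sigma(u)=\det\big(S(B_0^{-1}u)\varGamma\big)\cdot\exp q(B_0^{-1}u)\), where \(S(B_0^{-1}u)\) and \(q(B_0^{-1}u)\) denote, as in (\ref{det_expression}), the matrix \(S(U)\) and the series \(q(U)\) evaluated at the specialised vector \(U\). It is worth recording that \(q(U)=\sum_{j}c_{w_j}U_{w_j}+\tfrac12\sum_{i,j}q_{ij}U_{w_i}U_{w_j}\) already involves only the gap coordinates, and that each \(p_j\) occurring in \(S(U)\), being a polynomial in \(U_1,\dots,U_j\), specialises to a polynomial in those \(U_{w_k}\) with \(w_k\le j\); thus both objects on the right genuinely become functions of \((u_{w_g},\dots,u_{w_1})\) alone, which is exactly what the notation in (\ref{det_expression}) records.
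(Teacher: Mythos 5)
Your argument is correct and is exactly the route the paper takes: the paper offers no separate proof, treating (\ref{det_expression}) as an immediate consequence of Theorem \ref{inf_det} after setting \(U_i=0\) for \(i\notin\{w_1,\dots,w_g\}\) and substituting (\ref{Key}), which is precisely your specialisation argument. Your extra bookkeeping — reading \(B_0\) as the columns of \(B\) paired with the gap variables (those carrying the unit leading coefficients, which is the reading forced by the triangular-unimodular claim and the \((3,4)\) example) so that \(BU=B_0B_0^{-1}u=u\) — just makes explicit what the paper leaves tacit.
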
% ****************************************************************************************************
This is the main tool to prove our main theorem. 
Since \(p_j\) is Hurwitz integral as a polynomial of \(\{u_{w_i}\}\),  
we see \(\det\big(S({B_0}^{-1}u){\varGamma}\big)\) is Hurwitz integral. 
The crucial part of the proof is on the expansion of \(\exp\,{q({B_0}^{-1}u)}\). 
\begin{example}
For the \((3,4)\)-curve, the set \(\{\varphi_{v_i}\}\)  consists of 
\begin{equation*}
  \cdots,\ x^3, \ y^2, \ xy, \  x^2, \ y, \  x,\  1.
\end{equation*}
Taking their expansion with respect to \(t\), we define 
\begin{equation*}
\begin{aligned}
\varGamma
&=
\left[\ \begin{matrix}
\ddots & \vdots      & \vdots      & \vdots     & \vdots       & \vdots     & \vdots     & \vdots \\
\cdots & (x^3)_{(-4)} & (y^2)_{(-4)} & (xy)_{(-4)} &  (x^2)_{(-4)} &  (y)_{(-4)} &  (x)_{(-4)} & (1)_{(-4)} \\
\cdots & (x^3)_{(-3)} & (y^2)_{(-3)} & (xy)_{(-3)} &  (x^2)_{(-3)} &  (y)_{(-3)} &  (x)_{(-3)} & (1)_{(-3)} \\
\hline
\cdots & (x^3)_{(-2)} & (y^2)_{(-2)} & (xy)_{(-2)} &  (x^2)_{(-2)} &  (y)_{(-2)} &  (x)_{(-2)} & (1)_{(-2)} \\
\cdots & (x^3)_{(-1)} & (y^2)_{(-1)} & (xy)_{(-1)} &  (x^2)_{(-1)} &  (y)_{(-1)} &  (x)_{(-1)} & (1)_{(-1)} \\
\cdots & (x^3)_{(0)} & (y^2)_{(0)} & (xy)_{(0)} &  (x^2)_{(0)} &  (y)_{(0)} &  (x)_{(0)} & (1)_{(0)} \\
\cdots & (x^3)_{(1)} & (y^2)_{(1)} & (xy)_{(1)} &  (x^2)_{(1)} &  (y)_{(1)} &  (x)_{(1)} & (1)_{(1)} \\
\cdots & (x^3)_{(2)} & (y^2)_{(2)} & (xy)_{(2)} &  (x^2)_{(2)} &  (y)_{(2)} &  (x)_{(2)} & (1)_{(2)} \\
\ddots & \vdots      & \vdots      & \vdots     & \vdots       & \vdots     & \vdots     & \vdots
\end{matrix}\ \right].
% &=[...]
\end{aligned}
\end{equation*}
As we explained before, let define \(c_j\)s by
\begin{equation*}
\begin{aligned}
\log\sqrt{\frac{1}{t^{6-2}}\frac{\omega_5(t)}{dt}}&=
\sum_j \frac{c_j}jt^j \\
&=-\mu_1t+({\mu_1}^2-2\mu_2)\tfrac12t^2
 +(-{\mu_1}^3+3\mu_2\mu_1+3\mu_3)\tfrac13t^3 \\
&\ \ \ \ 
 +({\mu_1}^4-4\mu_2{\mu_1}^2-4\mu_3\mu_1-6\mu_4+2{\mu_2}^2)\tfrac14t^4\\
&\ \ \ \ 
 +(-{\mu_1}^5+5\mu_2{\mu_1}^3+5\mu_3{\mu_1}^2+5(3\mu_4-{\mu_2}^2)\mu_1-5\mu_3\mu_2-\tfrac{15}2\mu_5)\tfrac15t^5\\
&\ \ \ \ 
 +\cdots.
\end{aligned}
\end{equation*}
Then
\begin{equation}\label{c_j2}
\left\{
\begin{aligned}
c_1&=-\mu_1,\\
c_2&={\mu_1}^2-2\mu_2,\\
c_5&=-{\mu_1}^5+5\mu_2{\mu_1}^3+5\mu_3{\mu_1}^2+5(3\mu_4-{\mu_2}^2)\mu_1-5\mu_3\mu_2-\mbox{\fbox{\(\tfrac{15}2\mu_5\)}}.
\end{aligned}\right.
\end{equation}
The boxed last term gives arise some denominator. 
The elements \(q_j\in\mathbb{Z}[\vec{\mu}]\) are defined by
\begin{equation*}
\begin{aligned}
\bvec{\xi}&(t_1,t_2)-\frac{dt_1dt_2}{(t_1-t_2)^2} \\
&=\sum_{i,j}q_{ij}{t_1}^{i-1}{t_2}^{j-1}\\
&={\mu_4}{t_1}^2
+2{\mu_4}{t_2}{t_1}
+{\mu_4}{t_2}^2\\
&\quad +({\mu_5}-2{\mu_4}{\mu_1}){t_1}^3
+(2{\mu_5}-4{\mu_4}{\mu_1}){t_2}{t_1}^2
+(2{\mu_5}-4{\mu_4}{\mu_1}){t_2}^2{t_1}
+({\mu_5}-2{\mu_4}{\mu_1}){t_2}^3\\
&\quad +(3{\mu_4}{\mu_1}^2-2{\mu_5}{\mu_1}-2{\mu_2}{\mu_4}-{\mu_6}){t_1}^4
+(6{\mu_4}{\mu_1}^2-4{\mu_5}{\mu_1}-4{\mu_2}{\mu_4}-2{\mu_6}){t_2}^3{t_1}\\
&\quad +(7{\mu_4}{\mu_1}^2-5{\mu_5}{\mu_1}-4{\mu_2}{\mu_4}-3{\mu_6}){t_2}^2{t_1}^2
+(6{\mu_4}{\mu_1}^2-4{\mu_5}{\mu_1}-4{\mu_2}{\mu_4}-2{\mu_6}){t_2}{t_1}^3\\
%&\quad +(3{\mu_4}{\mu_1}^2-2{\mu_5}{\mu_1}-2{\mu_2}{\mu_4}-{\mu_6}){t_2}^4\\
%&\quad +(-4{\mu_4}{\mu_1}^3+3{\mu_5}{\mu_1}^2+(6{\mu_2}{\mu_4}+2{\mu_6}){\mu_1}+2{\mu_3}{\mu_4}-2{\mu_5}{\mu_2}){t_1}^5\\
%&\quad +(-8{\mu_4}{\mu_1}^3+6{\mu_5}{\mu_1}^2+(12{\mu_2}{\mu_4}+4{\mu_6}){\mu_1}+4{\mu_3}{\mu_4}-4{\mu_5}{\mu_2}){t_2}^4{t_1}\\
%&\quad +(-10{\mu_4}{\mu_1}^3+8{\mu_5}{\mu_1}^2+(14{\mu_2}{\mu_4}+6{\mu_6}){\mu_1}+4{\mu_3}{\mu_4}-5{\mu_5}{\mu_2}){t_2}^2{t_1}^3\\
%&\quad +(-10{\mu_4}{\mu_1}^3+8{\mu_5}{\mu_1}^2+(14{\mu_2}{\mu_4}+6{\mu_6}){\mu_1}+4{\mu_3}{\mu_4}-5{\mu_5}{\mu_2}){t_2}^3{t_1}^2\\
%&\quad +(-8{\mu_4}{\mu_1}^3+6{\mu_5}{\mu_1}^2+(12{\mu_2}{\mu_4}+4{\mu_6}){\mu_1}+4{\mu_3}{\mu_4}-4{\mu_5}{\mu_2}){t_2}{t_1}^4\\
%&\quad +(-4{\mu_4}{\mu_1}^3+3{\mu_5}{\mu_1}^2+(6{\mu_2}{\mu_4}+2{\mu_6}){\mu_1}+2{\mu_3}{\mu_4}-2{\mu_5}{\mu_2}){t_2}^5\\
&\quad + \cdots,
\end{aligned}
\end{equation*}
and the necessary elements among them are given explicitly by
\begin{equation*}
\left\{
\begin{aligned}
q_{11}&=0, \\
q_{21}&=0, \\
q_{12}&=0, \\
q_{22}&=-2\mu_4, \\
q_{51}&=-3\mu_4{\mu_1}^2+2\mu_5\mu_1+(2\mu_2\mu_4+\mu_6),\\
q_{15}&=-3\mu_4{\mu_1}^2+2\mu_5\mu_1+(2\mu_2\mu_4+\mu_6),\\
q_{52}&=8\mu_4{\mu_1}^3-6\mu_5{\mu_1}^2+(-12\mu_2\mu_4-4\mu_6)\mu_1+(-4\mu_3\mu_4+4\mu_5\mu_2),\\
q_{25}&=8\mu_4{\mu_1}^3-6\mu_5{\mu_1}^2+(-12\mu_2\mu_4-4\mu_6)\mu_1+(-4\mu_3\mu_4+4\mu_5\mu_2),\\
q_{55}&=-23\mu_4{\mu_1}^6+21\mu_5{\mu_1}^5+(96\mu_2\mu_4+19\mu_6){\mu_1}^4\\
      &\ \ +(60\mu_3\mu_4-68\mu_5\mu_2){\mu_1}^3+(56{\mu_4}^2-94{\mu_2}^2\mu_4-44\mu_6\mu_2-38\mu_3\mu_5-34\mu_8){\mu_1}^2\\
      &\ \ +((-72\mu_3\mu_2-44\mu_5)\mu_4+40\mu_5{\mu_2}^2-20\mu_6\mu_3-17\mu_9)\mu_1\\
      &\ \ -24\mu_2{\mu_4}^2+(12{\mu_2}^3-9{\mu_3}^2-12\mu_6)\mu_4+(11\mu_6{\mu_2}^2+19\mu_3\mu_5+18\mu_8)\mu_2+5{\mu_5}^2.\\
\end{aligned}
\right.
\end{equation*}
Setting all \(U_j\) to be \(0\) except \(U_1\), \(U_2\), and \(U_5\). 
Then the relation with the variables of \(\sigma(u)=\sigma(u_5,u_2,u_1)\) is given by
\begin{equation*}
\begin{alignedat}{4}
U_1&=u_1+\mu_1&u_2&+{}&(\mu_2{\mu_1}^2+{\mu_3}\mu_1+2\mu_4-{\mu_2}^2)&u_5,\\
U_2&=         &u_2&+{}&(                 {\mu_1}^3-2\mu_2\mu_1-\mu_3)&u_5,\\
U_5&=         &   & {}&                                              &u_5.
\end{alignedat}
\end{equation*}
Using those elements, we have
\begin{equation*}
\begin{aligned}
q(U)&=q({B_0}^{-1}\,\tp{[\,u_5,u_2,u_1\,]})\\
   &=c_1U_1+c_2U_2+c_5U_5\\
   &\qquad-\tfrac12(q_{11}{U_1}^2+(q_{12}+q_{21})U_2U_1+(q_{15}+q_{51})U_1U_5\\
   &\qquad\quad +q_{22}{U_2}^2+(q_{52}+q_{25})U_5U_2+q_{55}{U_5}^2). 
\end{aligned}
\end{equation*}
In our case, 
\begin{equation*}
\begin{aligned}
p_1&={u_1}+{\mu_1}{u_2}+({\mu_2}{\mu_1}^2+{\mu_3}{\mu_1}+2{\mu_4}-{\mu_2}^2){u_5},\\
p_2&={u_2}+\tfrac12{u_1}^2+\tfrac12{\mu_1}^2{u_2}^2+({\mu_1}^3-2{\mu_2}{\mu_1}-{\mu_3}){u_5}\\
&\qquad +{\mu_1}{u_2}{u_1}+({\mu_2}{\mu_1}^2+{\mu_3}{\mu_1}+2{\mu_4}-{\mu_2}^2){u_5}{u_1}\\
&\qquad\ +({\mu_2}{\mu_1}^3+{\mu_3}{\mu_1}^2+2{\mu_4}{\mu_1}-{\mu_2}^2{\mu_1}){u_5}{u_2}\\
&\qquad\ \  +\big(
          \tfrac12{\mu_2}^2{\mu_1}^4+{\mu_3}{\mu_2}{\mu_1}^3+(2{\mu_2}{\mu_4}-{\mu_2}^3+\tfrac12{\mu_3}^2){\mu_1}^2\\
&\qquad\ \ \  +(2{\mu_3}{\mu_4}-{\mu_3}{\mu_2}^2){\mu_1}+2{\mu_4}^2-2{\mu_2}^2{\mu_4}+\tfrac12{\mu_2}^4\big){u_5}^2,\\
%&\qquad\ \ \  \ \ \ \cdots\cdots\cdots\cdots
p_3&=\tfrac16{u_1}^3+u_2u_1+\mbox{\lq\lq higher weight terms in \(u_1\), \(u_2\), \(u_5\)"},\\
p_4&=\tfrac1{24}{u_1}^4+\tfrac12u_2{u_1}^2+\tfrac12{u_2}^2+\mbox{\lq\lq higher weight terms in \(u_1\), \(u_2\), \(u_5\)"},\\
p_5&=\tfrac1{120}{u_1}^5+\tfrac16u_2{u_1}^3+\tfrac12{u_2}^2u_1+u_5+\mbox{\lq\lq higher weight terms in \(u_1\), \(u_2\), \(u_5\)"},
 \end{aligned}
\end{equation*}
and so on. 
\end{example}
The rest of this paper is devoted to investigate 
how does the prime \(2\) occur in denominators, in general, like the denominator \(2\) of \(\frac{\mu_5}2\) in (\ref{c_j2}). 
\vskip 25pt
\section{The last step of the proof}\label{the_exp_part}%%%%%%%%%%%%%%%%%%%%%%%%%%%%%%%%%%%%%%%%%%%%%%%%%%%%
%%%%%%%%%%%%%%%%%%%%%%%%%%%%%%%%%%%%%%%%%%%%%%%%%%%%%%%%%%%%%%%%%%%%%%%%%%%%%%%%%%%%%%%%
%\noindent
In this section, we analyze the exponential part of Nakayashi's expression  (\ref{det_expression}), 
and complete the proof of the main result. 
Let
\begin{equation*}
\tilde{f}(t,s)=s-t^2-G,  \ \ \ G=G(t,s).
\end{equation*}
Concerning (\ref{c_j}), it is sufficient to check only the odd power term of \(t\) 
in the expansion of \(\omega_{w_g}(t)\) with respect to \(t\). 
Because of the identity
\begin{equation*}
\frac{\ 1\ }{\ \tilde{f}_s\ }=\frac1{1-G_s}
=(1+G_s)(1+{G_s}^2)(1+{G_s}^{2^2})(1+{G_s}^{2^3})\cdots, \ \ \ \ G_s=\tfrac{\partial}{\partial s}G, 
\end{equation*}
the fact that the polynomial \(G(t,s)\) is a multiple of \(s^2\), 
and the general equality \((X+Y)^{2^k}\equiv X^{2^k}+Y^{2^k}\mod{2}\), letting
\begin{equation*}
\tilde{f}=s-t^2-\sum_{j\geqq 2}f_j(t)s^j, \ \ \ \ 
(\ f_j(t)\in\mathbb{Z}[\bvec{\mu}][t]\ ), 
\end{equation*}
we have
\begin{equation*}
G_s=1-\tilde{f}_s=\sum_{j\geqq 2}j\,f_j(t)s^{j-1}. 
\end{equation*}
\begin{lemma}% --------------------------------------------------------------------------
Writing down \(\omega_{w_g}(t)\)  in terms of \(t\) and \(s\), it is of type
\begin{equation*}
\frac{t^{\mathrm{\,even}}\,s^{\mathrm{\,odd}}}{\tilde{f}_s}dt \ \ \mbox{or}\ \  
\frac{t^{\mathrm{\,even}}\,s^{\mathrm{\,even}}}{\tilde{f}_s}dt. 
\end{equation*}
On the other hand the pair of parities of powers of \(x\) and \(y\) in a term of \(f(x,y)\)
corresponds to the pair of parities of powers of \(t\) and \(s\) in a term of  \(\tilde{f}(t,s)\). 
More precisely, a term of the type \(x^{\mathrm{odd}}y^{\mathrm{odd}}\)
corresponds the term of the type \(t^{\mathrm{\,odd}}s^{\mathrm{\,even}}\) in the former case, 
and does the terms of the type \(t^{\mathrm{\,odd}}s^{\mathrm{\,odd}}\) in the latter case. 
\end{lemma}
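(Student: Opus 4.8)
The plan is to deduce all three assertions from the monomial substitution (\ref{x_y_by_t_s}) together with the homogeneity of $\omega_{w_g}$, handling the two alternatives $a,b,c,d>0$ and $a,b,c,d<0$ in parallel. Write $\varepsilon=\mathrm{sign}(a)$ and recall $e=|d|+2|b|$, $q=|c|+2|a|$, and $ad-bc=\varepsilon$; since $a,b,c,d$ share the sign $\varepsilon$, the last relation reads $|a||d|-|b||c|=\varepsilon$, so $|a||d|-|b||c|\equiv 1\pmod 2$. First I would settle the shape of $\omega_{w_g}$: the index $w_g=2g-1$ corresponds to $m_0=n_0=0$, so (\ref{omega}) gives $\omega_{w_g}(t)=-dx/f_y$, and (\ref{f_tilde_form}), after absorbing the factor $\{t^2\text{ or }s\}$, writes this as $\pm\,t^{\alpha_0}s^{\beta_0}\tilde f_s^{-1}\,dt$ for explicit integers $\alpha_0,\beta_0$. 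Since $\mathrm{wt}(t)=1$, $\mathrm{wt}(s)=2$, $\mathrm{wt}(\tilde f_s)=\mathrm{wt}(\tilde f)-\mathrm{wt}(s)=0$ and $\mathrm{wt}(dt)=1$, homogeneity forces $\alpha_0+2\beta_0+1=2g-1$, i.e. $\alpha_0=2(g-1-\beta_0)$ is even, while the parity of $\beta_0$ is a fixed invariant of the pair $(e,q)$; this gives exactly the two stated types (for $(e,q)=(3,4)$ one has $\beta_0=1$, the first type).

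Next I would establish the parity correspondence. Substituting a monomial $x^iy^j$ of $f$ via (\ref{x_y_by_t_s}) and multiplying by the rescaling factor $t^{|c|e}s^{|a|e}\cdot\{t^2\text{ or }-s\}$ from (\ref{f_tilde}) produces a single monomial $t^\alpha s^\beta$ with $\alpha\equiv|d|\,i+|c|\,j+\kappa_t$ and $\beta\equiv|b|\,i+|a|\,j+\kappa_s\pmod 2$, where $\kappa_t,\kappa_s$ depend only on $\varepsilon$. The linear part $\left(\begin{smallmatrix}|d|&|c|\\|b|&|a|\end{smallmatrix}\right)$ has determinant $|a||d|-|b||c|\equiv 1\pmod 2$, hence is invertible over $\mathbb{F}_2$, so $(i,j)\bmod 2\mapsto(\alpha,\beta)\bmod 2$ is an affine bijection of $\mathbb{F}_2^2$. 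This is precisely the asserted correspondence: the parity class of the bidegree $(i,j)$ of a term of $f$ determines, and is determined by, the parity class of the bidegree $(\alpha,\beta)$ of the matching term of $\tilde f$, so in particular terms of distinct parity classes cannot cancel against one another.

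Finally, to locate the image of $x^{\mathrm{odd}}y^{\mathrm{odd}}$, I would set $i\equiv j\equiv 1\pmod 2$ in the formulas above and compare with the exponents $(\alpha_0,\beta_0)$ of $\omega_{w_g}=-dx/f_y$, which is the image of the bidegree $(0,0)$ under the same substitution-and-rescaling. A one-line computation, valid in both sign cases, yields $\alpha-\alpha_0\equiv 1$ and $\beta-\beta_0\equiv 1\pmod 2$. Since $\alpha_0$ is even, $\alpha$ is odd; and $\beta$ has parity opposite to that of $\beta_0$. Hence in the former case ($\beta_0$ odd, $\omega_{w_g}$ of type $t^{\mathrm{even}}s^{\mathrm{odd}}\tilde f_s^{-1}dt$) a term $x^{\mathrm{odd}}y^{\mathrm{odd}}$ corresponds to a term $t^{\mathrm{odd}}s^{\mathrm{even}}$, and in the latter case ($\beta_0$ even) to a term $t^{\mathrm{odd}}s^{\mathrm{odd}}$, as claimed. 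The only error-prone part is the bookkeeping in the last two steps: carrying the rescaling exponents $|c|e,|a|e$ and the $\{t^2\text{ or }-s\}$ alternative correctly through both sign cases. Once that is in place, the whole lemma is arithmetic modulo $2$, using only $\gcd(e,q)=1$ (through $w_g=2g-1$, hence $\alpha_0+2\beta_0=2g-2$) and $ad-bc=\pm1$.
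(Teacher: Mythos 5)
Your first two steps are sound, and in fact somewhat cleaner than the paper's own proof, which simply runs a parity case analysis on \(e,a,b,c,d\) and checks the claim from (\ref{f_tilde}) and (\ref{f_tilde_form}): the weight count \(\alpha_0+2\beta_0+1=2g-1\) does force the \(t\)-exponent of \(\omega_{w_g}\) to be even, and the mod-\(2\) invertibility of \(\left(\begin{smallmatrix}|d|&|c|\\ |b|&|a|\end{smallmatrix}\right)\) (determinant \(\equiv ad-bc\equiv1\)) correctly gives the bijection of parity classes between terms of \(f\) and terms of \(\tilde f\), with no possibility of cancellation.

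The last step, however, rests on a false identification: \(\omega_{w_g}=-dx/f_y\) is \emph{not} the image of the bidegree \((0,0)\) under the substitution-and-rescaling that produces \(\tilde f\). That map sends \(x^0y^0\) to \(t^{|c|e}s^{|a|e}\cdot\{t^2\ \text{or}\ -s\}\), whereas by (\ref{f_tilde_form}) the numerator of \(\omega_{w_g}\) is \(t^{|c|e-|c|-|d|-1}\,s^{|a|e-|a|-|b|-1}\cdot\{t^2\ \text{or}\ s\}\); the discrepancy \(t^{|c|+|d|+1}s^{|a|+|b|+1}\), coming from converting \(f_y\) into \(\tilde f_s\) (the \(dx/dt\) factor), is not of even bidegree. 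Concretely, for \((e,q)=(3,4)\) the image of \(x^0y^0\) is \(-t^6s^4\) while \(\omega_5=st^2\,dt/\tilde f_s\). If one actually carries out the "one-line computation" from your identification, one gets \(\alpha-\alpha_0\equiv|c|+|d|\) and \(\beta-\beta_0\equiv|a|+|b|\pmod 2\), and these are not always \(1\): \(|a|+|b|=2\) for \((3,4)\) and \(|c|+|d|=2\) for \((3,5)\); for \((3,4)\) your route would predict that \(x^{\mathrm{odd}}y^{\mathrm{odd}}\) lands on \(s^{\mathrm{odd}}\), contradicting the term \(\mu_5t^3s^2\) of \(\tilde f\). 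The congruences you assert are nevertheless true: comparing directly with the exponents in (\ref{f_tilde_form}) gives \(\alpha-\alpha_0=|d|(1-i)+|c|(1-j)+1\) and \(\beta-\beta_0=|b|(1-i)+|a|(1-j)+1\), both odd when \(i\) and \(j\) are odd, because the linear contribution \(|d|i+|c|j\equiv|c|+|d|\), \(|b|i+|a|j\equiv|a|+|b|\) cancels against the offset \(|c|+|d|+1\), \(|a|+|b|+1\). So the lemma does follow within your framework, but only after replacing the incorrect identification by this direct comparison of exponents; as written, the crucial step is unjustified.
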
% -----------------------------------------------------------------------------
\begin{proof}
Of course, in any case, the parity of \(a\) and \(b\) is different. 
If \(e\) is even, then both \(q\) and \(b\) are odd. 
In this case, \(c\) is odd and \(d\) is even. 
If \(e\) is odd, then \(d\) is odd. 
Summarizing the deduction, in each case separated by the parities of  \(e\), \(a\), \(b\), \(c\), \(d\), 
it is easy to check the claim by using (\ref{f_tilde}) and (\ref{f_tilde_form}). 
\end{proof}
We note here that, in 
\begin{equation*}
\log\sqrt{\frac{\omega_{w_g}(t)}{t^{2g-2}dt}}=\sum_{j=1}^{\infty}\frac{c_j}{j}t^{j-1}, 
\end{equation*}
namely, in the expression
\begin{equation*}
\sum_{j=1}^{\infty}{c_j}t^{j-1}
=\frac12\frac{d}{dt}\log\frac{\omega_{w_g}(t)}{t^{2g-2}dt}=
\frac{\frac12\frac{d}{dt}(\omega_{2g-2}(t)/dt)}
     {\omega_{2g-2}(t)/dt}
=\frac{\frac12\frac{d}{dt}(1+\cdots)}{1+\cdots},
\end{equation*}
all the coefficients in the denominators belongs to \(\mathbb{Z}[\bvec{\mu}]\). 
In the coefficients in the series inside \(\frac{d}{dt}\), 
the coefficients of the term of type \(t^{\mathrm{even}}\) is canceled out by the denominator \(2\) of \(\frac12\).  
Therefore, the critical case occurs only concerning the coefficients of odd power terms 
in the expansion of \(\omega_{w_g}(t)\). 
\par
In the following, we check the two cases in Lemma above separately. 
\vskip 5pt
\noindent
{\it The former case}. In 
\begin{equation*}
\frac{s^{\mathrm{odd}}t^{\mathrm{even}}}{\tilde{f}_s}=t^{\mathrm{even}}s^{\mathrm{even}}\,\frac{s}{1-G_s}
=t^{\mathrm{even}}s^{\mathrm{even}}\,s(1+G_s)(1+{G_s}^2)(1+{G_s}^{2^2})(1+{G_s}^{2^3})\cdots, 
\end{equation*}
it is sufficient to check \(s(1+G_s)\). 
As an element in \(\mathbb{Z}[\bvec{\mu}][[t]]\), we have
\begin{equation*}
\begin{aligned}
s(1+G_s)&=s+sG_s\\
&\equiv t^2+G+sG_s \mod{2}\\
&\equiv t^2+G+\mbox{\lq\lq\,the sum of the terms of type  \(s^{\mathrm{odd}}\) in \(G\)\,"} \mod{2}\\
&\equiv \mbox{\lq\lq\,the sum of the terms of type  \(s^{\mathrm{even}}\)  in \(\tilde{f}\)\,"} \mod{2}.
\end{aligned}
\end{equation*}
In this situation, by replacing the coefficient \(\mu_j\) by \(\mu'_j/2\) in the set \(\bvec{\mu}\)
if and only if the \(\mu_j\) occurs as the coefficient of any term 
of type \(t^{\mathrm{odd}}s^{\mathrm{even}}\) in \(\tilde{f}\)
(this corresponds a term of type \(x^{\mathrm{odd}}y^{\mathrm{odd}}\)), 
we have the replaced set \(\bvec{\mu}'\). 
Then the coefficient of any term of type  \(t^{\mathrm{odd}}\) in 
\(s(1+G_s)\in\mathbb{Z}[\bvec{\mu}'][[t]]\) belongs to \(2\mathbb{Z}[\bvec{\mu}']\). 
As a result, we see that all the coefficients of the expansion of 
\begin{equation*}
\frac12\frac{d}{dt}\log\bigg(\frac{\omega_{w_g}(t)}{t^{2g-2}dt}\bigg)
\end{equation*}
with respect to \(t\) belong to  \(\mathbb{Z}[\bvec{\mu}']\). 
Hence, \(\forall{c_j}\in\mathbb{Z}[\bvec{\mu}']\). 
Now our claim is proved for the first case. 
\vskip 10pt
\noindent
{\it The latter case}. Since 
\begin{equation*}
\frac{s^{\mathrm{even}}t^{\mathrm{even}}}{\tilde{f}_s}=t^{\mathrm{even}}s^{\mathrm{even}}\,\frac{1}{1-G_s}
=t^{\mathrm{even}}s^{\mathrm{even}}\,(1+G_s)(1+{G_s}^2)(1+{G_s}^{2^2})(1+{G_s}^{2^3})\cdots.
\end{equation*}
It is sufficient to check \,\((1+G_s)\). 
The crucial case may occur for the coefficient
of any term of the type  \(t^{\mathrm{odd}}s^{\mathrm{odd}}\) in the expansion of \(\tilde{f}\). 
Our claim is proved by the same argument in the former case, 
and we have completed the proof. 
%\end{proof}
\vskip 20pt
\ \ 

\bibliographystyle{amsplain}
\addcontentsline{toc}{section}{Bibliography}
\bibliography{refers}

\providecommand{\bysame}{\leavevmode\hbox to3em{\hrulefill}\thinspace}
\providecommand{\MR}{\relax\ifhmode\unskip\space\fi MR }
% \MRhref is called by the amsart/book/proc definition of \MR.
\providecommand{\MRhref}[2]{%
  \href{http://www.ams.org/mathscinet-getitem?mr=#1}{#2}
}
\providecommand{\href}[2]{#2}
\begin{thebibliography}{10}

\bibitem{baker_1897}
{Baker,~H.F.}, \emph{{Abelian functions -- Abel's theorem and the allied theory
  including the theory of the theta functions --}}, Cambridge Univ. Press,
  1897.

\bibitem{bannai-kobayashi_2006}
{Bannai,~K. and Kobayashi,~S.}, \emph{{Divisibilities of Eisenstein-Kronecker
  numbers and $p$-adic theta functions at supersingular primes}}, unpubilshed
  manuscript.

\bibitem{barsotti_1970}
{Barsotti,~I.}, \emph{{Considerazioni sulle funzioni th\^eta}}, {Istituo
  Nazionale di alta mathematica, Symposia Mathematica} \textbf{III} (1970),
  247--277.

\bibitem{breen}
{Breen, L.}, \emph{{Fonctions th\^eta et th\'eor{\accent 18 e}me du cube}},
  Lecture Notes in Mathematics, vol. 980, Springer-Verlag, 1983.

\bibitem{bl_2004}
{Buchstaber,~V.M. and Leykin,~D.V.}, \emph{{Heat equations in a nonholonomic
  frame}}, {Functional Anal. Appl.,} \textbf{38} (2004), 88--101.

\bibitem{eemop_2009}
{Eilbeck,~J.C., Enol'skii,~V.Z., Matsutani,~S., \^Onishi,~Y. and Previato,~E.},
  \emph{{Abelian functions for trigonal curves of genus three}}, {International
  Mathematics Research Notices,} \textbf{{2008}} ({2008}), {102--139}.

\bibitem{fay_1973}
{Fay,~J.}, \emph{{Theta functions on Riemann surfaces}}, Lect. Notes in Math.,
  vol. 352, Springer-Verlag, 1973.

\bibitem{macdonald_1995}
{Macdonald,~I.G.}, \emph{{Symmetric functions and Hall polynomials}},
  {Clarendon Press}, Oxford, 1995.

\bibitem{mazur-tate_1991}
{Mazur,~B. and Tate,~J.}, \emph{{The $p$-adic sigma function}}, Duke Math. J.,
  \textbf{62} (1991), 663--688.

\bibitem{MST_2006}
{Mazur,~B., Stein,~W. and Tate,~J.}, \emph{Computation of $p$-adic height and
  log convergence}, Documenta math., \textbf{{\rm Extra Volume : John H.
  Coates' Sixtieth Birthday}} (2006), 577--614.

\bibitem{miura_1998}
{Miura, S.}, \emph{{Linear code on affine algebraic curves {\rm (in Japanese,
  Affine Dai-su-kyoku-sen-zyo no sen-kei-fu-gou)}}}, {Journal of the institute
  of electorics, information and communication engineering A,} \textbf{{\rm
  J81-A}} (1998), 1398--1421.

\bibitem{nakayashiki_2010}
{Nakayashiki,~A.}, \emph{{Sigma function as a tau function}}, {International
  Mathematical Research Notices,} \textbf{2010} (2010), 373--394.

\bibitem{nakayashiki_2008}
{Nayakashiki, A.}, \emph{{On algebraic expressions of sigma functions for
  \((n,s)\) curves}}, {Asian J. Math.} \textbf{14} (2010), no.~2, 175--212.

\bibitem{lect_chuo}
{\^Onishi,~Y.}, \emph{{\rm Theory of Abelian functions (in Japanese, Abel
  Kan-su-ron)}}, Department of Math., Chuo University, 2013.

\end{thebibliography}
\end{document}